\documentclass{article}
\usepackage{amsmath}
\usepackage{amsfonts}
\usepackage{amsthm}
\usepackage{amssymb}
\usepackage{mathrsfs}
\usepackage{hyperref}

\theoremstyle{definition}

\newtheorem{example}{Example}

\theoremstyle{remark}
\newtheorem*{remark}{Remark}

\theoremstyle{plain}
\newtheorem{theorem}{Theorem}
\newtheorem{corollary}{Corollary}
\newtheorem{lemma}{Lemma}

\DeclareMathOperator{\Alg}{Alg}
\DeclareMathOperator{\im}{Im}
\DeclareMathOperator{\re}{Re}
\DeclareMathOperator{\codim}{codim}
\DeclareMathOperator{\defeq}{\stackrel{\text{def\textsuperscript{\underline{n}}}}{=}}
\DeclareMathOperator{\thm}{\text{Th\textsuperscript{\underline{m}} }}

\begin{document}

\title{Multipliers of Hilbert Spaces of Analytic Functions on the Complex Half-Plane}
\author{Andrzej S. Kucik}
\date{}
\maketitle

\begin{flushright}
School of Mathematics \\
University of Leeds \\
Leeds LS2 9JT \\
United Kingdom \\
e-mail: \ttfamily{mmask@leeds.ac.uk}

\end{flushright}

\begin{abstract}
It follows, from a generalised version of Paley-Wiener theorem, that the Laplace transform is an isometry between certain spaces of weighted $L^2$ functions defined on $(0, \infty)$ and (Hilbert) spaces of analytic functions on the right complex half-plane (for example Hardy, Bergman or Dirichlet spaces). We can use this fact to investigate properties of \emph{multipliers} and \emph{multiplication operators} on the latter type of spaces. In this paper we present a full characterisation of multipliers in terms of a generalised concept of a \emph{Carleson measure}. Under certain conditions, these spaces of analytic functions are not only Hilbert spaces but also Banach algebras, and are therefore contained within their spaces of multipliers. We provide some necessary as well as sufficient conditions for this to happen and look at its consequences. \\
\textbf{Mathematics Subject Classification (2010).} Primary 30H50, 46J15, 47B99; Secondary 46E22, 46J20. \\
\textbf{Keywords.} Banach algebras, Banach spaces, Bergman spaces, Carleson measures, Dirichlet spaces, Hardy spaces, Hardy-Sobolev spaces, Hilbert spaces, Laplace transform, maximal ideal spaces, multiplication operators, multipliers, reproducing kernels, spaces of analytic functions, weighted $L^2$ spaces, Zen spaces.
\end{abstract}

\section{Introduction and notation}
Banach spaces of analytic functions defined on the unit disk of the complex plane and operators acting on them have been studied in great detail for the past hundred years (most famous of them being the Hardy spaces $H^p$, \cite{duren1970}, \cite{hoffman1962}, \cite{mashreghi2009}), and their properties are well understood. Many of them can easily be applied to more general regions of the complex plane, however it is not always possible if we also consider regions of infinite measure, for example a complex half-plane with the Lebesgue area measure.

Let $\tilde{\nu}$ be a positive regular Borel measure on $[0, \infty)$ satisfying the following ($\Delta_2$)-condition:
\begin{displaymath}
\sup_{r>0} \frac{\tilde{\nu}[0, 2r)}{\tilde{\nu}[0, r)} < \infty.
\end{displaymath}
Let also $\nu$ be a positive regular Borel measure on $\overline{\mathbb{C}_+} = [0, \infty) \times i\mathbb{R}$ (the closed right complex-half plane) given by $d\nu = d\tilde{\nu} \otimes d\lambda$, where $\lambda$ denotes the Lebesgue measure. For $1 \leq p < \infty$ a \emph{Zen space on} $\mathbb{C}_+$ is the space
\begin{displaymath}
	A^p_\nu = \left\{F : \mathbb{C}_+ \longrightarrow \mathbb{C} \; \text{analytic} \; : \; \left\|F\right\|^p_{A^p_\nu} := \sup_{\varepsilon>0} \int_{\overline{\mathbb{C}_+}} \left|F(z+\varepsilon)\right|^p \, d\nu(z) < \infty \right\}.
\end{displaymath}
It was introduced in \cite{jacob2013}, and named after Zen Harper who constructed it in \cite{zen2009} and \cite{zen2010}. Evidently $A^2_\nu$ is a Hilbert space. There are several well-known examples of Zen spaces, such as the Hardy spaces $H^p(\mathbb{C}_+)$ (where $\tilde{\nu}$ is the Dirac measure in 0) \cite{duren1970}, \cite{hoffman1962}, \cite{mashreghi2009}, or the weighted Bergman spaces $\mathcal{B}^p_\alpha(\mathbb{C}_+)$ (where $d\tilde{\nu}(t) = t^\alpha dt, \; \alpha > -1$) \cite{duren2004}, \cite{hedenmalm2000}. In the Hilbertian setting, $A^2_\nu$ spaces of functions on the complex-half plane might be viewed as "continuous" counterparts (in some sense) of spaces of analytic functions on the complex unit disk. We notice that
\begin{displaymath}
	\left\|f\right\|^2_{H^2(\mathbb{D})} = \sum_{n=0}^\infty \|a_n\|^2 \; \; \; \; \; (f(z)=\sum_{n=0}^\infty a_n z^n \in H^2(\mathbb{D}))
	\end{displaymath}
or
\begin{displaymath}
	\left\|f\right\|^2_{\mathcal{B}^2_0(\mathbb{D})} = \sum_{n=0}^\infty \frac{\|a_n\|^2}{n+1} \; \; \; \; \; (f(z)=\sum_{n=0}^\infty a_n z^n \in \mathcal{B}^2_0(\mathbb{D})).
	\end{displaymath}
Whereas
\begin{displaymath}
	\left\|F\right\|^2_{H^2(\mathbb{C}_+)} = \int_0^\infty |f(t)|^2 \, dt  \; \; (F(z)=\int_0^\infty f(t) e^{-tz} \, dt \in H^2(\mathbb{C}_+), \, f \in L^2(0, \infty))
	\end{displaymath}
and
\begin{displaymath}
	\left\|F\right\|^2_{\mathcal{B}_0^2(\mathbb{C}_+)} = \int_0^\infty |f(t)|^2 \, \frac{dt}{t} \; \; \; \; \; (F(z)=\int_0^\infty f(t) e^{-tz} \, dt \in \mathcal{B}^2_0(\mathbb{C}_+), \, f \in L^2_{\frac{1}{t}}(0, \infty).
	\end{displaymath}
It can be shown that the Laplace transform defines an isometric map from certain weighted $L^2_w(0, \infty)$ spaces into $A^2_\nu$ \cite{jacob2013}, and thus indeed, the Zen spaces are half-plane equivalents of some of the weighted Hardy spaces $H^2(\beta)$, that is the Hilbert space of analytic functions on the disk with
\begin{displaymath}
\|f\|^2_{H^2(\beta)} = \left\|\sum_{n=0}^\infty a_n z^n \right\|^2_{H^2(\beta)} := \sum_{n=0}^\infty \|a_n\| \beta(n)^2 < \infty,
\end{displaymath}
(for some real, positive sequence $(\beta(n))_{n=0}^\infty$, for details see for example \cite{chalendar2014} or \cite{cowen1995}); linking the weighted $L^2$ spaces on $(0, \infty)$ with spaces of analytic functions on the complex half-plane in an analogous way as spaces of analytic functions on the complex unit disk are linked to the weighted $\ell^2$ spaces. 

The Zen spaces, however, do not cover many important examples of spaces of analytic functions on the complex half-plane, such as the Dirichlet space or Hardy-Sobolev space, which are included in the definition of $H^2(\beta)$ on the complex unit disk. This is why in Section \ref{sec:pre} of this paper we present a construction of more general Banach spaces, $A^p(\mathbb{C}_+, (\nu_n)_{n=0}^m)$, of which the Zen spaces are a special case (along with Dirichlet, Hardy-Sobolev and many other types of spaces). We show the existence of an isometry between closed subspaces $A^2_{(m)}$ of $A^2(\mathbb{C}_+, (\nu_n)_{n=0}^m)$ and weighted $L^2$ spaces on $(0, \infty)$, again using the Laplace transform, and find their reproducing kernels. In Section \ref{sec:mult} we define and describe their multipliers, using the notion of a Carleson measure and its generalisations. The Carleson measures for the half-plane have important applications in control theory \cite{jacob2004}, particularly significant when dealing with controls lying in weighted $L^2$ spaces, with Laplace transforms in the spaces we consider in this article (for example Hardy-Sobolev space) \cite{jacob2013}. It was in fact control theory problems in engineering and the role of Laplace-Carleson embeddings in controllability and admissibility that lead	towards the study of Zen spaces \cite{jacob2014}. However, as said before, Zen spaces are often insufficient when dealing with certain problems, and this is the main motivation for studying $A^2_{(m)}$ spaces.

In Section \ref{sec:ba} we investigate some Banach algebras contained within $A^2_{(m)}$ and show that $A^2_{(m)}$ are sometimes, under certain conditions, Banach algebras themselves. And finally in Section \ref{sec:specideals} we state some results about the ideals of the Banach algebra of $\mathscr{M}(A^2_{(m)})$. We also raise some important questions regarding the maximal ideal spaces of $\mathscr{M}(A^2_{(m)})$ and the Corona Problem, which still remain to be answered.

Construction of the isometry in Section \ref{sec:pre} was presented in \cite{jacob2013}, for the Laplace transform and a Zen space. The characterisation of multipliers of the Dirichlet space on the disk ($\mathcal{D}$) in terms of Carleson measures was initially given by David Stegenga in \cite{stegenga1980}, where Carleson measures for $\mathcal{D}$ are also described. And the idea of Carleson measures themselves was formed by Lennart Carleson in his solution of the corona problem \cite{carleson1962} for $H^\infty$). A good and very recently published reference for the Dirichlet space on the disk ($\mathcal{D}$) is \cite{el-fallah2014}. There is extensive literature devoted to the study of Banach algebras, which are usually seen as spaces of bounded linear operators on some Hilbert space, but Banach algebras which are also Hilbert spaces are not considered very often (publications known to the author include a short article by Yu. N. Kuznetsova \cite{kuznetsova2006}, and some brief mentions in \cite{acta1995}, \cite{dales2000} and \cite{nikolskii1970}). We believe that the results presented in Section \ref{sec:ba} are mainly new, and were not published before, with the exception of the equation \eqref{eq:convo} (which has been given in \cite{acta1995}, \cite{nikolskii1970} and \cite{kuznetsova2006}) and Theorem \ref{thm:bal} (which has been known, to some extent, for $m=0,1$). The last theorem of Section \ref{sec:specideals} is a variant of a well-known theorem from \cite{hoffman1962}, but its proof had to be altered substantially, due to difficulties arising from introduction of derivatives and unbounded domain.

\section{Preliminaries}
\label{sec:pre}
One of the most fundamental tools used to study Zen spaces with $p=2$ is the fact that the Laplace transform defines an isometric map $\mathfrak{L} : L^2_w (0, \infty) \longrightarrow A^2_\nu$, where
\begin{displaymath}
	w(t) = 2\pi \int_0^\infty e^{-2rt} \, d\tilde{\nu}(r) \; \; \; \; \; (t > 0)
\end{displaymath} 
(see \cite{jacob2013}). In fact, we can extend this result to study more general spaces on the complex half-plane.
\begin{theorem}
\label{thm:mainthm}
The $n$-th derivative of the Laplace transform defines an isometric map $\mathfrak{L}^{(n)} : L^2_{w_n} (0, \infty) \longrightarrow A^2_\nu$, where
\begin{equation}
	w_n(t) = 2\pi t^{2n}\int_0^\infty e^{-2rt} \, d\tilde{\nu}(r) \; \; \; \; \; (t > 0).
\label{eq:w}
\end{equation}
(Here $\nu_n$ is defined in the same way as $\nu$ above).
\end{theorem}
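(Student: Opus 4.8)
The plan is to reduce the statement to the $n=0$ case, i.e.\ to the isometry $\mathfrak{L}\colon L^2_w(0,\infty)\to A^2_\nu$ of \cite{jacob2013}, by means of the elementary identity
\begin{equation*}
\mathfrak{L}^{(n)}f\;=\;(-1)^n\,\mathfrak{L}\!\left(t\mapsto t^nf(t)\right),
\end{equation*}
which follows from $\tfrac{d}{dz}e^{-tz}=-t\,e^{-tz}$ after differentiating $n$ times under the integral sign, together with the observation that the substitution $g(t):=t^nf(t)$ transports the weight \emph{exactly}: since $w_n(t)=t^{2n}w(t)$,
\begin{equation*}
\|g\|_{L^2_w}^2=\int_0^\infty t^{2n}|f(t)|^2\,w(t)\,dt=\int_0^\infty |f(t)|^2\,w_n(t)\,dt=\|f\|_{L^2_{w_n}}^2,
\end{equation*}
so that $f\in L^2_{w_n}(0,\infty)$ if and only if $g\in L^2_w(0,\infty)$, with equal norms.

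Carrying this out, I would proceed as follows. First, for $f\in L^2_{w_n}(0,\infty)$ the function $g=t^nf$ lies in $L^2_w(0,\infty)$, so by \cite{jacob2013} the Laplace integral $\mathfrak{L}g(z)=\int_0^\infty g(t)e^{-tz}\,dt$ converges on $\mathbb{C}_+$, is analytic there, and belongs to $A^2_\nu$. I would then \emph{define} $\mathfrak{L}^{(n)}f:=(-1)^n\mathfrak{L}g$ and check that the name is justified: on the dense subspace of those $f$ that are themselves Laplace transformable — for instance bounded $f$ supported in a compact subinterval of $(0,\infty)$, which are dense in $L^2_{w_n}$ — one may legitimately differentiate $z\mapsto\int_0^\infty f(t)e^{-tz}\,dt$ $n$ times under the integral sign (dominating by $t^n|f(t)|e^{-\delta t}$ on each half-plane $\{\re z\ge\delta>0\}$), so that $(-1)^n\mathfrak{L}g$ really is the $n$-th complex derivative of $\mathfrak{L}f$ there. (Note that $\mathfrak{L}f$ itself need not converge for a general $f\in L^2_{w_n}$ when $n\ge1$, which is why the map must be defined through $g$.) Second, the isometry is then immediate: $\mathfrak{L}^{(n)}$ is linear, and
\begin{equation*}
\bigl\|\mathfrak{L}^{(n)}f\bigr\|_{A^2_\nu}=\bigl\|(-1)^n\mathfrak{L}g\bigr\|_{A^2_\nu}=\|\mathfrak{L}g\|_{A^2_\nu}=\|g\|_{L^2_w}=\|f\|_{L^2_{w_n}},
\end{equation*}
the middle equality being \cite{jacob2013}. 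This proves the theorem.

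As an alternative that makes the shape of $w_n$ transparent, I would also sketch a direct argument rerunning the Paley--Wiener/Plancherel computation underlying \cite{jacob2013} with the extra factor $t^{2n}$: writing $z=r+iy$ and $d\nu=d\tilde{\nu}(r)\,dy$, Plancherel in the variable $y$ gives $\int_{\mathbb{R}}|\mathfrak{L}^{(n)}f(r+\varepsilon+iy)|^2\,dy=2\pi\int_0^\infty t^{2n}|f(t)|^2 e^{-2t(r+\varepsilon)}\,dt$; integrating against $d\tilde{\nu}(r)$, applying Tonelli, and recognising $2\pi t^{2n}\int_0^\infty e^{-2tr}\,d\tilde{\nu}(r)=w_n(t)$ yields $\|\mathfrak{L}^{(n)}f\|_{A^2_\nu}^2=\sup_{\varepsilon>0}\int_0^\infty|f(t)|^2 e^{-2t\varepsilon}w_n(t)\,dt=\|f\|_{L^2_{w_n}}^2$, the last step by monotone convergence as $\varepsilon\downarrow0$.

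The proof contains no new idea beyond the $n=0$ case; the only points requiring care are bookkeeping. One must know that $w$, and hence $w_n$, is finite and (a.e.) positive — this is exactly where the $(\Delta_2)$-condition enters, since it forces $\tilde{\nu}[0,R)$ to grow at most polynomially in $R$ and therefore $\int_0^\infty e^{-2rt}\,d\tilde{\nu}(r)<\infty$ for every $t>0$ — and one must justify differentiation under the integral sign (equivalently, the Tonelli step in the alternative argument), which is cleanest to establish first on a dense class of well-behaved $f$ and then transport to all of $L^2_{w_n}$ via the isometry just obtained. I do not anticipate any genuine obstacle beyond this.
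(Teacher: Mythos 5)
Your proposal is correct and takes essentially the same route as the paper: the paper's proof likewise sets $g_n(t)=t^nf(t)$, uses $\mathfrak{L}^{(n)}[f]=(-1)^n\mathfrak{L}[g_n]$, and then runs exactly the Plancherel/Tonelli computation you give as your ``alternative'' argument, while your main argument merely packages that computation as a citation of the $n=0$ case via the identity $w_n=t^{2n}w$. Your remarks on defining the map through $g$ (since $\mathfrak{L}f$ itself need not converge) and on finiteness of $w_n$ are sound points of care that the paper leaves implicit.
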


\begin{proof}
The proof follows closely the proof of Proposition 2.3 in \cite{jacob2013}, using the elementary relation between the Laplace and the Fourier transform ($\mathfrak{F}$), and that the latter defines an isometry (by the Plancherel theorem \cite{mashreghi2009}). Let $f \in L_{w_n}^2 (0, \infty)$, $g_n(t)=t^n f(t)$ and $z=r+is \in \mathbb{C}_+$. Then
\begin{align*}
\sup_{\varepsilon>0} \int_{\mathbb{C}_+} \left| \mathfrak{L}^{(n)} [f](z +\varepsilon)\right|^2 d\nu(z) &= \sup_{\varepsilon>0} \int_0^\infty \int_{-\infty}^\infty \left| \mathfrak{L}^{(n)} [f](r+is+\varepsilon)\right|^2 d\lambda(s) d\tilde{\nu}(r) \\
&= \sup_{\varepsilon>0} \int_0^\infty \left\| (-1)^n \mathfrak{L}[t^n f] (r+is+\varepsilon) \right\|_{L^2(i\mathbb{R})}^2 d\tilde{\nu} (r) \\
&= \sup_{\varepsilon>0} \int_0^\infty \left\| \mathfrak{L}[g_n] (r+is+\varepsilon) \right\|_{L^2(i\mathbb{R})}^2 \, d\tilde{\nu} (r) \\
&= \sup_{\varepsilon>0} \int_0^\infty \left\| \mathfrak{F}\left[e^{-(r+\varepsilon)t} g_n \right]\right\|_{L^2(\mathbb{R})}^2 \, d\tilde{\nu} (r) \\
&= \sup_{\varepsilon>0} \int_0^\infty 2\pi \left\| e^{-(r+\varepsilon)t} g_n \right\|_{L^2(0, \infty)}^2 \, d\tilde{\nu} (r) \\
&= \sup_{\varepsilon>0} \int_0^\infty \left| g_n(t) \right|^2 \, 2\pi \int_0^\infty e^{-2(r+\varepsilon)t} \, d\tilde{\nu} (r) \, dt \\
&\stackrel{\eqref{eq:w}}{=} \int_0^\infty \left| f(t) \right|^2 w_n(t) \, dt.
\end{align*}
\end{proof}

This result allows us to generalise the notion of a Zen space, defining a new Hilbert space. First, let $(\tilde{\nu}_n)_{n=0}^m$ be a sequence (not necessarily finite) of positive regular Borel measures on $[0, \infty)$ satisfying the $(\Delta_2)$-condition and let $\nu_n$ be a measure on $\overline{\mathbb{C}_+}$ given by $d\nu_n = d\tilde{\nu}_n \otimes d\lambda$ (where $\lambda$ is as above, the Lebesgue measure). Set
\begin{displaymath}
	A^p\left(\mathbb{C}_+, \, (\nu_n)_{n=0}^m\right) = \left\{F : \mathbb{C}_+ \longrightarrow \mathbb{C} \; : \; F^{(n)} \in A^p_{\nu_n}, \; \forall \, 0 \leq n \leq m \right\}.
\end{displaymath}
It is clearly a Banach space, with respect to the norm given by
\begin{displaymath}
	\left\|F\right\|_{A^p\left(\mathbb{C}_+, \, (\nu_n)_{n=0}^m\right)} : = \left( \sum_{n=0}^m \left\|F^{(n)}\right\|^p_{A^p_{\nu_n}} \right)^{1/p} \; \; \; \; \; \left(F \in A^p_{(m)}\right).
\end{displaymath}
Analogously, if $p=2$, it is a Hilbert space, with the inner product given by
	\begin{displaymath}
	\left\langle F, \, G \right\rangle_{A^2\left(\mathbb{C}_+, \, (\nu_n)_{n=0}^m\right)} := \sum_{n=0}^m \left\langle F^{(n)}, \, G^{(n)} \right\rangle_{A^2_{\nu_n}} \; \; \; \; \; \forall F, G \in A^2\left(\mathbb{C}_+, \, (\tilde{\nu}_n)_{n=0}^m\right).
\end{displaymath}
and as a consequence of the previous theorem, the Laplace transform defines an isometric map $\mathfrak{L} : L^2_{w_{(m)}} (0, \infty) \longrightarrow A^2\left(\mathbb{C}_+, \, (\nu_n)_{n=0}^m\right)$, where
\begin{displaymath}
	w_{(m)}(t) := \sum_{n=0}^m w_n(t) \quad \text{ and } \quad w_n(t) := 2\pi t^{2n} \int_0^\infty e^{-2rt} \, d\tilde{\nu}_n (r) \; \; \; \; \; (t > 0).
\label{eq:wm}
\end{displaymath}
We shall thereby restrict our attention to
\begin{displaymath}
	A^2_{(m)} := \mathfrak{L}\left(L^2_{w_{(m)}}(0, \infty)\right) \subseteq A^2\left(\mathbb{C}_+, \, (\nu_n)_{n=0}^m\right),
\end{displaymath}
where the inclusion becomes equality if and only if $\mathfrak{L}$ is a surjective map. It is the case, for example, when $\tilde{\nu}_0 = \delta_0$ (the Dirac delta function at 0), by Paley-Wiener Theorem \cite{duren1970}, \cite{hoffman1962}, \cite{mashreghi2009}, or $d\tilde{\nu}_0(r)=r^\alpha dr, (\alpha>-1)$ \cite{duren2007}, \cite{gallardo2009}. The surjectivity of $\mathfrak{L} : L^2_w (0, \infty) \longrightarrow A^2_\nu$ is discussed in \cite{zen2009}. $A^2_{(m)}$ is a closed subspace of $A^2\left(\mathbb{C}_+, \, (\nu_n)_{n=0}^m\right)$, and hence a Hilbert space on its own right. It is in fact a reproducing kernel Hilbert space, and  we can easily find its kernels. Given $F= \mathfrak{L}[f] \in A^2_{(m)}$, we have
\begin{align*}
F(z) &= \int_0^\infty f(t) \frac{e^{-tz}}{w_{(m)}(t)} w_{(m)}(t) \, dt \\
&= \left\langle f(t), \, \frac{e^{-t\overline{z}}}{w_{(m)}(t)} \right\rangle_{L^2_{w_{(m)}}(0, \infty)} \\
&\!\!\!\! \stackrel{\thm\ref{thm:mainthm}}{=} \left\langle F(\zeta), \mathfrak{L}\left[\frac{e^{-t\overline{z}}}{w_{(m)}(t)}\right](\zeta) \right\rangle_{A^2_{(m)}}.
\end{align*}
So, by the uniqueness of reproducing kernels \cite{aronszajn1950}, \cite{paulsen2009}, we have that the reproducing kernel of $A^2_{(m)}$ at $z \in \mathbb{C}_+$ is given by
\begin{equation}
k^{A^2_{(m)}}_z (\zeta) : = \int_0^\infty \frac{e^{-t(\zeta+\overline{z})}}{w_{(m)}(t)} \, dt.
\label{eq:kernel}
\end{equation}
This kernel coincides with the reproducing kernel of $A^2(\mathbb{C}_+, \, (\nu_n)_{n=0}^m)$ if and only if $\mathfrak{L}$ is surjective, which extends the Paley-Wiener theorem to more general spaces of analytic functions defined on the half-plane, and it is clear that $\mathfrak{L}$ is onto in case of Hardy or weighted Bergman spaces, and consequently in case of any space contained within them (in subset sense), by Theorem \ref{thm:mainthm}.

\section{Multipliers}
\label{sec:mult}
One of the main advantages of introducing the notion of the space $A^2_{(m)}$ is that it is a generalisation of the Dirichlet space on the right complex half-plane (amongst many others). Recall that a function $F$ is said to be in the Dirichlet space $\mathcal{D}(\mathbb{C}_+)$ if
\begin{displaymath}
	\left\|F\right\|^2_{\mathcal{D}(\mathbb{C}_+)} : = \left\|F\right\|^2_{H^2(\mathbb{C}_+)} + \int_{\mathbb{C}_+} \left|F'(z)\right|^2 \, dz < \infty.
\end{displaymath}
So if $\tilde{\nu}_0 = \frac{1}{2\pi} \delta_0$ (Dirac delta at 0) and $\tilde{\nu}_1$ is the Lebesgue measure (with weight $1/\pi$), then indeed $A^2_{(1)} = \mathcal{D}(\mathbb{C}_+)$. Therefore we may often adopt tools used in the study of the latter space (usually defined on the unit disk of the complex plane though), to study more general examples.

We define
\begin{displaymath}
	\mathscr{M}(A^2_{(m)}) : = \left\{ h: \mathbb{C}_+ \longrightarrow \mathbb{C} \; : \; hF \in A^2_{(m)}, \; \forall F \in A^2_{(m)} \right\},
\end{displaymath}
that is the \emph{space of multipliers of $A^2_{(m)}$}. Clearly if $h \in \mathscr{M}(A^2_{(m)})$, then it must be analytic (since $A^2_{(m)} \neq \{0\}$). For each $h \in \mathscr{M}(A^2_{(m)})$ we can also define the \emph{multiplication operator}, $M_h \in \mathscr{B}(A^2_{(m)})$ (i.e. the Banach algebra of bounded linear operators on $A^2_{(m)}$), by $M_h F := hF, \, F \in A^2_{(m)}$. We may associate the space of multipliers with the space of multiplication operators, equipping it with the \emph{multiplier norm}
\begin{displaymath}
	\left\|h\right\|_{\mathscr{M}(A^2_{(m)})} : = \left\|M_h\right\|_{\mathscr{B}(A^2_{(m)})} = \sup_{\left\|F\right\|_{A^2_{(m)}} \leq 1} \left\|h F\right\|_{A^2_{(m)}}.
\end{displaymath}

The following two lemmata are well-known and evidently hold for any reproducing kernel Hilbert space (see for example Theorems 5.1.4 and 5.1.5 from \cite{el-fallah2014} in case of $\mathcal{D}$).
\begin{lemma}
Let $M_h$ be a multiplication operator on $A^2_{(m)}$. Then
\begin{displaymath}
\left(M^*_h k^{A^2_{(m)}}_z\right)(\zeta) = \overline{h(z)} k^{A^2_{(m)}}_z(\zeta).
\end{displaymath}
\end{lemma}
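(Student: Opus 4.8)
The plan is to use the defining property of the reproducing kernel together with the adjoint relation. First I would recall that for any $F \in A^2_{(m)}$ and any $z \in \mathbb{C}_+$ one has $\langle F, k^{A^2_{(m)}}_z \rangle_{A^2_{(m)}} = F(z)$, and that $M_h$ is bounded, so its Hilbert-space adjoint $M_h^*$ exists as a bounded operator on $A^2_{(m)}$. The key computation is then, for arbitrary $F \in A^2_{(m)}$,
\begin{displaymath}
\left\langle F, \, M_h^* k^{A^2_{(m)}}_z \right\rangle_{A^2_{(m)}} = \left\langle M_h F, \, k^{A^2_{(m)}}_z \right\rangle_{A^2_{(m)}} = \left\langle hF, \, k^{A^2_{(m)}}_z \right\rangle_{A^2_{(m)}} = (hF)(z) = h(z) F(z).
\end{displaymath}

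Next I would rewrite the right-hand side using the reproducing property again: $h(z) F(z) = h(z) \langle F, k^{A^2_{(m)}}_z \rangle_{A^2_{(m)}} = \langle F, \overline{h(z)} k^{A^2_{(m)}}_z \rangle_{A^2_{(m)}}$, pulling the scalar $h(z)$ out of the second slot of the inner product as its conjugate. Comparing the two expressions, we find that $F \mapsto \langle F, M_h^* k^{A^2_{(m)}}_z - \overline{h(z)} k^{A^2_{(m)}}_z \rangle_{A^2_{(m)}}$ vanishes for every $F \in A^2_{(m)}$; since $A^2_{(m)}$ is a Hilbert space, this forces $M_h^* k^{A^2_{(m)}}_z = \overline{h(z)} k^{A^2_{(m)}}_z$ as elements of $A^2_{(m)}$, and evaluating both sides at $\zeta \in \mathbb{C}_+$ (which is legitimate since these are genuine functions in the RKHS) gives the claimed identity.

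There is really no serious obstacle here: the only point requiring a word of care is that $M_h^*$ is well-defined, which is guaranteed precisely because $h \in \mathscr{M}(A^2_{(m)})$ makes $M_h$ bounded on the Hilbert space $A^2_{(m)}$, so the argument is purely formal manipulation of the reproducing kernel identity. The statement (and proof) is indeed valid verbatim in any reproducing kernel Hilbert space, as the preceding remark in the excerpt notes; one simply needs that point evaluations are bounded functionals represented by the kernels $k^{A^2_{(m)}}_z$, which was established in Section \ref{sec:pre} via the explicit formula \eqref{eq:kernel}.
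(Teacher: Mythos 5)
Your proposal is correct and follows essentially the same argument as the paper: apply the reproducing property to $\langle F, M_h^* k^{A^2_{(m)}}_z\rangle = \langle hF, k^{A^2_{(m)}}_z\rangle = h(z)F(z) = \langle F, \overline{h(z)}k^{A^2_{(m)}}_z\rangle$ and conclude by the arbitrariness of $F$. The extra care you take (boundedness of $M_h$, hence existence of $M_h^*$) is implicit in the paper's proof but entirely consistent with it.
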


\begin{proof}
Let $F \in A^2_{(m)}$.
\begin{displaymath}
\left\langle F, \, M^*_h( k^{A^2_{(m)}}_z) \right\rangle = \left\langle M_h(F), \,  k^{A^2_{(m)}}_z \right\rangle = h(z)F(z) = \left\langle F, \, \overline{h(z)}  k^{A^2_{(m)}}_z \right\rangle,
\end{displaymath}
and since it holds for all $F$ in $A^2_{(m)}$, we can deduce the desired result.
\end{proof}

\begin{lemma}
If $h \in \mathscr{M}(A^2_{(m)})$, then $h$ is bounded and $\left\| h \right\|_{H^\infty} \leq \left\| h \right\|_{\mathscr{M}(A^2_{(m)})}$.
\end{lemma}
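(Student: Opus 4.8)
The plan is to deduce this directly from the previous lemma, which identifies each reproducing kernel $k^{A^2_{(m)}}_z$ as an eigenvector of $M_h^*$ with eigenvalue $\overline{h(z)}$. Since the modulus of an eigenvalue of a bounded operator cannot exceed its operator norm, this will yield the pointwise estimate $|h(z)| \le \|M_h\|_{\mathscr{B}(A^2_{(m)})} = \|h\|_{\mathscr{M}(A^2_{(m)})}$ at every $z \in \mathbb{C}_+$, and taking the supremum over $z$ will give both that $h \in H^\infty$ and the asserted norm inequality.

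First I would record that the reproducing kernels never vanish: by \eqref{eq:kernel} together with the reproducing property,
\begin{displaymath}
\left\| k^{A^2_{(m)}}_z \right\|^2_{A^2_{(m)}} = k^{A^2_{(m)}}_z(z) = \int_0^\infty \frac{e^{-2t\re(z)}}{w_{(m)}(t)} \, dt > 0 \qquad (z \in \mathbb{C}_+),
\end{displaymath}
the integral being finite because $k^{A^2_{(m)}}_z \in A^2_{(m)}$ and strictly positive because its integrand is. Next, fixing $h \in \mathscr{M}(A^2_{(m)})$ and $z \in \mathbb{C}_+$ and invoking the previous lemma,
\begin{displaymath}
|h(z)| \, \left\| k^{A^2_{(m)}}_z \right\|_{A^2_{(m)}} = \left\| M_h^* k^{A^2_{(m)}}_z \right\|_{A^2_{(m)}} \le \left\| M_h^* \right\|_{\mathscr{B}(A^2_{(m)})} \left\| k^{A^2_{(m)}}_z \right\|_{A^2_{(m)}}.
\end{displaymath}
Dividing by $\| k^{A^2_{(m)}}_z \|_{A^2_{(m)}} > 0$ and using $\| M_h^* \| = \| M_h \| = \| h \|_{\mathscr{M}(A^2_{(m)})}$ gives $|h(z)| \le \| h \|_{\mathscr{M}(A^2_{(m)})}$; as $z$ was arbitrary, $\| h \|_{H^\infty} \le \| h \|_{\mathscr{M}(A^2_{(m)})}$.

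I do not anticipate a real obstacle here — this is the standard reproducing-kernel-Hilbert-space argument — and the only step meriting a word of justification is the non-vanishing of $k^{A^2_{(m)}}_z$, which I handle via the explicit formula above (equivalently, $k^{A^2_{(m)}}_z = 0$ would force $F(z)=0$ for every $F \in A^2_{(m)}$, contradicting $A^2_{(m)} \neq \{0\}$).
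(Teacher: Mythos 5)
Your proposal is correct and follows essentially the same route as the paper: both deduce the bound from the fact that $\overline{h(z)}$ is an eigenvalue of the bounded operator $M_h^*$ with eigenvector $k^{A^2_{(m)}}_z$, hence of modulus at most $\|M_h\|_{\mathscr{B}(A^2_{(m)})}$. Your explicit verification that $k^{A^2_{(m)}}_z \neq 0$ is a worthwhile detail the paper leaves implicit, but it does not change the argument.
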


\begin{proof}
Let $h \in \mathscr{M}(A^2_{(m)})$. Then $M^*_h$ is a bounded operator on $A^2_{(m)}$, so its eigenvalues are bounded, and of modulus no bigger than $\left\| M_h \right\|_{\mathscr{B}(A^2_{(m)})}$. By the previous lemma it follows that the values of $h$ are bounded and of modulus no more than $\left\| h \right\|_{\mathscr{M}(A^2_{(m)})}$.
\end{proof}

These results used to prove the following theorem, characterising the multipliers of $A^2_{(m)}$. Versions of this theorem for Hardy and Bergman spaces are obvious (see for example Proposition 1.13 in \cite{alsaker2009}) and can easily be extended to all Zen spaces. The version for the Dirichlet space is given in \cite{el-fallah2014} (Theorem 5.1.7), and describes multipliers using Carleson measures. Recall that a positive Borel measure $\mu$ on $\Omega \subseteq \mathbb{C}$ is called a \emph{Carleson measure} for a Hilbert space $\mathcal{H}$ of (complex-valued) functions defined on $\Omega$ if there exists a constant $C$ such that
\begin{displaymath}
	\int_\Omega \left|f\right|^2 \, d\mu \leq C \left\|f\right\|^2_{\mathcal{H}} \; \; \; \; \; (f \in \mathcal{H}).
\end{displaymath}

Let us now state and prove the characterisation of multipliers for the general case, $A^2_{(m)}$.

\begin{theorem} ~
\begin{enumerate}
	\item $\mathscr{M}(A^2_{(0)})=H^\infty(\mathbb{C}_+)$ and $\|h\|_{\mathscr{M}(A^2_{(0)})} = \|h\|_{H^\infty(\mathbb{C}_+)}$.
	\item If, for all $0 \leq k \leq n \leq m<\infty$, $\mu_{n,k}$, given by $d\mu_{n,k}(z) := \left|h^{(k)}\right|^2 d\nu_n$, is a Carleson measure for $A^2_{\nu_{n-k}}$, then $h \in \mathscr{M}(A^2_{(m)})$.
	\item If $1 \leq m < \infty$, then $h \in \mathscr{M}(A^2_{(m)})$ if and only if for all $F \in A^2_{(m)}$ and all $1\leq n \leq m$ there exists $c_n > 0$ such that
		\begin{equation}
			\int_{\mathbb{C}_+} \left| \sum_{k=1}^n \binom{n}{k} F^{(n-k)}h^{(k)} \right|^2 \, d\nu_n \leq c_n \left\|F\right\|^2_{A^2_{(m)}}.
			\label{eq:quasicarleson}
		\end{equation}
		In particular, if $m=1$, then $h \in \mathscr{M}(A^2_{(1)})$ if and only if $\left|h'(z)\right|^2 d\nu_1$ is a Carleson measure for $A^2_{(1)}$.
\end{enumerate}
\end{theorem}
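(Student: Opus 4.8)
The engine for all three parts is the Leibniz rule $(hF)^{(n)}=\sum_{k=0}^{n}\binom{n}{k}h^{(k)}F^{(n-k)}$, together with the definition $\|F\|_{A^2_{(m)}}^2=\sum_{n=0}^m\|F^{(n)}\|_{A^2_{\nu_n}}^2$ and the elementary fact that multiplication by a function $h\in H^\infty(\mathbb C_+)$ is bounded on every $A^2_{\nu_n}$ with norm $\le\|h\|_{H^\infty}$ (one simply pulls $\sup_z|h(z)|$ out of $\sup_{\varepsilon>0}\int_{\overline{\mathbb C_+}}|h(z+\varepsilon)F(z+\varepsilon)|^2\,d\nu_n$, using $z+\varepsilon\in\mathbb C_+$). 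Part (1) is then immediate: the containment $\mathscr M(A^2_{(0)})\subseteq H^\infty(\mathbb C_+)$ with $\|h\|_{H^\infty}\le\|h\|_{\mathscr M}$ is the second lemma above (note $A^2_{(0)}$ is the Zen space $A^2_{\nu_0}$, or rather $\mathfrak L(L^2_{w_{(0)}})$), while the reverse containment with $\|h\|_{\mathscr M}\le\|h\|_{H^\infty}$ is the boundedness fact just stated; the two inequalities give equality. (One checks in passing that $hF$ stays in the subspace $\mathfrak L(L^2_{w_{(0)}})$, which is automatic whenever $\mathfrak L$ is onto, in particular in the Hardy and Bergman cases.)

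For part (2), fix $F\in A^2_{(m)}$ and $0\le n\le m$. Leibniz and Minkowski's inequality in $A^2_{\nu_n}$ give $\|(hF)^{(n)}\|_{A^2_{\nu_n}}\le\sum_{k=0}^{n}\binom{n}{k}\|h^{(k)}F^{(n-k)}\|_{A^2_{\nu_n}}$, and the Carleson hypothesis on $\mu_{n,k}=|h^{(k)}|^2\nu_n$ is precisely the statement $\|h^{(k)}G\|_{A^2_{\nu_n}}^2\le C_{n,k}\|G\|_{A^2_{\nu_{n-k}}}^2$ for $G\in A^2_{\nu_{n-k}}$; applying it with $G=F^{(n-k)}$ (which lies in $A^2_{\nu_{n-k}}$ since $n-k\le m$) bounds each summand by $C_{n,k}^{1/2}\|F\|_{A^2_{(m)}}$. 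Since $m<\infty$ there are only finitely many terms, so summing over $k$ and then over $n\le m$ yields $\|hF\|_{A^2_{(m)}}\lesssim\|F\|_{A^2_{(m)}}$, i.e. $h\in\mathscr M(A^2_{(m)})$.

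For part (3), necessity is the same computation run backwards: for $h\in\mathscr M(A^2_{(m)})$ the second lemma gives $\|h\|_{H^\infty}\le\|h\|_{\mathscr M}$, Leibniz rearranges to $\sum_{k=1}^{n}\binom{n}{k}F^{(n-k)}h^{(k)}=(hF)^{(n)}-hF^{(n)}$, the two terms on the right have $A^2_{\nu_n}$-norm at most $\|hF\|_{A^2_{(m)}}\le\|h\|_{\mathscr M}\|F\|_{A^2_{(m)}}$ and $\|h\|_{H^\infty}\|F^{(n)}\|_{A^2_{\nu_n}}\le\|h\|_{\mathscr M}\|F\|_{A^2_{(m)}}$ respectively, and Fatou turns the $A^2_{\nu_n}$-norm of the left side into the integral in \eqref{eq:quasicarleson}, which therefore holds with $c_n=4\|h\|_{\mathscr M}^2$. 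For sufficiency I would again write $(hF)^{(n)}=hF^{(n)}+\sum_{k=1}^{n}\binom{n}{k}F^{(n-k)}h^{(k)}$; \eqref{eq:quasicarleson} controls the sum in $A^2_{\nu_n}$, so everything reduces to bounding the diagonal term $hF^{(n)}$, i.e. to showing $h\in H^\infty(\mathbb C_+)$, after which $\|hF^{(n)}\|_{A^2_{\nu_n}}\le\|h\|_{H^\infty}\|F^{(n)}\|_{A^2_{\nu_n}}$ and summing over $n\le m$ finishes. To get $h\in H^\infty$ out of \eqref{eq:quasicarleson} I would test it with $n=1$ against the normalized reproducing kernels $k^{A^2_{(m)}}_z/\|k^{A^2_{(m)}}_z\|_{A^2_{(m)}}$: by \eqref{eq:kernel} and the $(\Delta_2)$-regularity of the $\tilde\nu_n$ these kernels are essentially constant on a disc of radius $\asymp\re z$ about $z$, which converts \eqref{eq:quasicarleson} into a pointwise bound on $h'$ and, after integration along horizontal rays, into $\|h\|_{H^\infty}\lesssim(c_1)^{1/2}$. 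Finally, for $m=1$ there is only the index $n=1$ and the sum in \eqref{eq:quasicarleson} collapses to $Fh'$, so \eqref{eq:quasicarleson} reads $\int_{\mathbb C_+}|F|^2|h'|^2\,d\nu_1\le c_1\|F\|_{A^2_{(1)}}^2$ for all $F\in A^2_{(1)}$ --- exactly the assertion that $|h'(z)|^2\,d\nu_1$ is a Carleson measure for $A^2_{(1)}$.

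\textbf{Where the difficulty lies.} I expect the genuine obstacle to be the sufficiency half of (3): the quasi-Carleson condition only ``sees'' the off-diagonal part $\sum_{k\ge1}$ of $(hF)^{(n)}$, whereas one still needs $h\in H^\infty(\mathbb C_+)$ to handle the diagonal term $hF^{(n)}$, and on the unbounded domain $\mathbb C_+$ this boundedness is not available from any soft or compactness argument --- it has to be squeezed out of the explicit kernel \eqref{eq:kernel} and the $(\Delta_2)$-condition as sketched (or, failing that, carried as an extra standing hypothesis, which is harmless since multipliers are automatically bounded by the second lemma). A secondary, purely technical nuisance threading through all three parts is keeping the $\sup_{\varepsilon>0}$ in the Zen-norms aligned with the Carleson estimates written for the fixed measures $\nu_n$, and verifying that multiplication preserves the closed subspace $A^2_{(m)}$ of $A^2(\mathbb C_+,(\nu_n)_{n=0}^m)$.
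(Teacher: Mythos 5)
Your proposal is correct and follows essentially the same route as the paper: Leibniz's rule, a crude Cauchy--Schwarz (you use Minkowski, the paper uses the $2^m\sum\binom{n}{k}^2$ bound) across the finitely many terms, the Carleson hypothesis applied to each $\|h^{(k)}F^{(n-k)}\|_{A^2_{\nu_n}}$ for part (2), and for part (3) the decomposition $(hF)^{(n)}=hF^{(n)}+\sum_{k\ge1}\binom{n}{k}F^{(n-k)}h^{(k)}$ run in both directions, with the lemma $\|h\|_{H^\infty}\le\|h\|_{\mathscr{M}}$ feeding the necessity estimate. The one place you go beyond the paper is the point you flag yourself: the sufficiency half of (3) needs $h\in H^\infty(\mathbb{C}_+)$ to handle the diagonal term $hF^{(n)}$ (and the $n=0$ term, which \eqref{eq:quasicarleson} does not address at all since the condition is only stated for $1\le n\le m$); the paper's own computation simply inserts $\|h\|^2_{H^\infty(\mathbb{C}_+)}$ at that step without deriving boundedness from \eqref{eq:quasicarleson}, so your observation identifies a gap that is present in the published argument as well. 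Your sketched extraction of $\|h\|_{H^\infty}$ from the $n=1$ condition via normalized reproducing kernels is the only step of your write-up that is not actually carried out (and it is delicate: recovering $h$ from a pointwise bound on $h'$ on an unbounded domain needs a normalization at infinity), but your fallback of carrying $h\in H^\infty(\mathbb{C}_+)$ as an implicit standing hypothesis is in effect what the paper does; the remaining bookkeeping (the $\sup_{\varepsilon>0}$ in the Zen norms, membership of $hF$ in the closed subspace for part (1), which the paper dismisses as obvious) is handled consistently with the paper.
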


\begin{proof}
The first part is obvious. For the second, let $F \in A^2_{(m)}$, then there exist constants $c_{n,k}$, for $0 \leq k \leq n \leq m$, such that
\begin{align*}
	\left\| Fh \right\|^2_{A^2_{(m)}} &= \sum_{n=0}^m \int_{\mathbb{C}_+} \left|(Fh)^{(n)}\right|^2 \, d\nu_n \\
	&\leq 2^m \sum_{n=0}^m \sum_{k=0}^n \binom{n}{k}^2 \int_{\mathbb{C}_+} \left|F^{(n-k)}h^{(k)}\right|^2 \, d\nu_n \\
	&\leq 2^m \sum_{n=0}^m \sum_{k=0}^n \binom{n}{k}^2 c_{n,k} \left\|F^{(n-k)}\right\|^2_{A^2_{\nu_{n-k}}} < \infty.
\end{align*}
For the last part suppose that \eqref{eq:quasicarleson} holds for some $h$. Then
\begin{align*}
\left\|Fh\right\|^2_{A^2_{(m)}} &= \sum_{n=0}^m \int_{\mathbb{C}_+} \left|(Fh)^{(n)}\right|^2 \, d\nu_n \\
&\leq \sum_{n=0}^m \int_{\mathbb{C}_+} \left|\sum_{k=0}^n \binom{n}{k} F^{(n-k)}h^{(k)}\right|^2 \, d\nu_n \\
&\stackrel{\eqref{eq:quasicarleson}}{\leq} 2\sum_{n=0}^m \left( \left\|h\right\|^2_{H^\infty(\mathbb{C}_+)} \int_{\mathbb{C}_+} \left|F^{(n)}\right|^2 \, d\nu_n + c_n \left\|F\right\|^2_{A^2_{(m)}} \right) \\
&= 2\left(\left\|h\right\|^2_{H^\infty(\mathbb{C}_+)} + \sum_{n=0}^m c_n \right) \left\|F\right\|^2_{A^2_{(m)}} < \infty,
\end{align*}
thus $h \in \mathscr{M}(A^2_{(m)})$. Conversely, suppose that $h \in \mathscr{M}(A^2_{(m)})$. Then
\begin{align*}
\int_{\mathbb{C}_+} \left| \sum_{k=1}^n \binom{n}{k} F^{(n-k)}h^{(k)} \right|^2 \, d\nu_n &= \int_{\mathbb{C}_+} \left| (Fh)^{(n)} - F^{(n)}h\right|^2 \, d\nu_n \\
& \leq 2 \left(\left\|Fh\right\|^2_{A^2_{(m)}} + \left\|h\right\|^2_{H^\infty(\mathbb{C}_+)} \left\|F\right\|^2_{A^2_{(m)}}\right) \\
&\leq 2 \left(\left\|M_h\right\|^2 + \left\|h\right\|^2_{H^\infty(\mathbb{C}_+)}\right) \left\|F\right\|^2_{A^2_{(m)}}.
\end{align*}
\end{proof}

\section{Banach algebras}
\label{sec:ba}

In an analogous way as in the previous section we can define the space of multipliers $\mathscr{M}(\mathcal{H})$ for an arbitrary Hilbert space of analytic functions $\mathcal{H}$. If $\mathcal{H}$ is a reproducing kernel Hilbert space, then $\mathscr{M}(\mathcal{H})$ is a unital Banach subalgebra of $\mathscr{B}(\mathcal{H})$, which is closed in the weak operator topology \cite{paulsen2009}. It is clear that unlike $\mathscr{M}(\mathcal{D}) \subset \mathcal{D}$, $\mathscr{M}(A^2_{(m)})$ is never a subset of $A^2_{(m)}$ (since constant functions are always in $\mathscr{M}(\mathcal{H})$, but can never be in $A^2_{(m)}$). So we may ask a question: is it possible to have the reverse inclusion, i.e. $A^2_{(m)} \subset \mathscr{M}(A^2_{(m)})$? And if so, what criteria need to be satisfied? It turns out that it is possible. We can choose measures $\tilde{\nu}_0, \ldots, \tilde{\nu}_m$ such that $A^2_{(m)}$ is a Banach algebra and hence, being closed under multiplication, it must be a (proper) subset $\mathscr{M}(A^2_{(m)})$. It was said above that $A^2_{(m)}$ is a reproducing kernel Hilbert space. We now want to find $(\nu_n)_{n=0}^m$ such that $A^2_{(m)}$ is also a Banach algebra. Let us start with the following observation.

\begin{theorem}
Let $\mathcal{H}$ be a Hilbert space of complex-valued functions defined on a domain $\Omega \subseteq \mathbb{C}$, which is also a Banach algebra with respect to pointwise multiplication. Then $\mathcal{H}$ is a reproducing kernel Hilbert space, and if $k_z$ is the reproducing kernel of $\mathcal{H}$ at $z \in \Omega$, then
\begin{equation}
	\sup_{z \in \Omega} \left\|k_z\right\|_{\mathcal{H}} \leq 1,
	\label{eq:ker}
\end{equation}
and consequently all elements of $\mathcal{H}$ are bounded.
\end{theorem}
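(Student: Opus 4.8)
The plan is to deduce the reproducing kernel property from the Banach algebra axiom, and then turn the submultiplicativity of the norm into the bound \eqref{eq:ker}. First I would show $\mathcal{H}$ is a reproducing kernel Hilbert space, i.e. that point evaluations $\delta_z : f \mapsto f(z)$ are bounded on $\mathcal{H}$. Here I would use the closed graph theorem: suppose $f_j \to f$ in $\mathcal{H}$ and $f_j(z) \to c$ in $\mathbb{C}$; since $\mathcal{H}$ is a Banach algebra under pointwise multiplication, for the fixed $f \in \mathcal{H}$ the map $g \mapsto fg$ is a bounded operator on $\mathcal{H}$, but this alone does not immediately give continuity of $\delta_z$ — so instead I would argue directly that $\delta_z$ is multiplicative, $\delta_z(fg) = \delta_z(f)\delta_z(g)$, hence $\delta_z$ is a nonzero algebra homomorphism $\mathcal{H} \to \mathbb{C}$ (nonzero because the algebra is unital, or at least because $\mathcal{H}$ contains a function not vanishing at $z$), and every character of a Banach algebra is automatically continuous with norm $\leq 1$. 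That gives boundedness of $\delta_z$ for every $z \in \Omega$, so $\mathcal{H}$ is an RKHS with kernel $k_z$ satisfying $f(z) = \langle f, k_z\rangle_{\mathcal{H}}$.

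Next I would extract the norm bound. The reproducing property gives $\|\delta_z\|_{\mathcal{H}^*} = \|k_z\|_{\mathcal{H}}$ (the norm of the functional equals the norm of the representing vector). On the other hand, viewing $\delta_z$ as a character of the Banach algebra $\mathcal{H}$, the standard fact that characters on a Banach algebra have norm at most $1$ — which follows because if $|\delta_z(f)| > \|f\|_{\mathcal{H}}$ for some $f$, then the element $f/\delta_z(f)$ would have norm $< 1$ yet $\delta_z$ value $1$, and iterating, $(f/\delta_z(f))^k$ has $\delta_z$-value $1$ but norm $\leq \|f/\delta_z(f)\|_{\mathcal{H}}^k \to 0$, a contradiction — yields $\|\delta_z\|_{\mathcal{H}^*} \leq 1$. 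Combining, $\|k_z\|_{\mathcal{H}} \leq 1$ for all $z \in \Omega$, which is \eqref{eq:ker}.

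Finally, boundedness of elements of $\mathcal{H}$ is immediate from Cauchy--Schwarz and \eqref{eq:ker}: for $f \in \mathcal{H}$ and any $z \in \Omega$,
\begin{displaymath}
	|f(z)| = \left|\left\langle f, \, k_z \right\rangle_{\mathcal{H}}\right| \leq \|f\|_{\mathcal{H}} \|k_z\|_{\mathcal{H}} \leq \|f\|_{\mathcal{H}},
\end{displaymath}
so $\|f\|_{H^\infty(\Omega)} \leq \|f\|_{\mathcal{H}} < \infty$.

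The main obstacle, and the step needing the most care, is the first one: rigorously pinning down that point evaluations are nonzero homomorphisms and invoking (or proving from scratch) automatic continuity of characters. One must be slightly careful about whether $\mathcal{H}$ is assumed unital and whether $\delta_z$ is genuinely nonzero at every $z$ — if at some point $z_0$ every function in $\mathcal{H}$ vanishes, then $k_{z_0} = 0$ and the inequality holds trivially, so that degenerate case is harmless; otherwise $\delta_z$ is a nonzero character and the norm-$\leq 1$ estimate applies. Everything after that is routine Hilbert space manipulation.
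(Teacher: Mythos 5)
Your overall strategy is sound and reaches the right conclusion, but it diverges from the paper in how \eqref{eq:ker} is obtained. Both you and the paper establish the reproducing kernel property the same way: point evaluations are multiplicative linear functionals on a Banach algebra, hence automatically continuous with norm at most $1$ (the paper cites Bonsall--Duncan for this). From there you finish immediately via the Riesz identification $\left\|k_z\right\|_{\mathcal{H}} = \left\|\delta_z\right\|_{\mathcal{H}^*} \leq 1$, which is the shortest path. The paper instead only uses the qualitative boundedness of $E_z$ and then re-derives the quantitative bound from the kernels themselves: writing $\left|k_z(\zeta)\right| \left\|k_\zeta\right\|_{\mathcal{H}}^2 = \left|\left\langle k_z k_\zeta, k_\zeta\right\rangle\right| \leq \left\|k_z k_\zeta\right\|_{\mathcal{H}} \left\|k_\zeta\right\|_{\mathcal{H}} \leq \left\|k_z\right\|_{\mathcal{H}} \left\|k_\zeta\right\|_{\mathcal{H}}^2$, cancelling, and combining with $\left\|k_z\right\|_{\mathcal{H}}^2 = \left|k_z(z)\right| \leq \sup_\zeta \left|k_z(\zeta)\right| \leq \left\|k_z\right\|_{\mathcal{H}}$. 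That argument buys a self-contained derivation of the constant $1$ from submultiplicativity alone, whereas yours leans entirely on the sharp constant in the automatic-continuity theorem; given that the paper invokes that same theorem anyway, your route is arguably cleaner. Your handling of the degenerate case ($\delta_{z_0} \equiv 0$, so $k_{z_0} = 0$) and the final Cauchy--Schwarz step match the paper.

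The one genuine defect is your parenthetical from-scratch justification that characters have norm at most $1$: you argue that $g = f/\delta_z(f)$ has $\left\|g\right\|_{\mathcal{H}} < 1$ while $\delta_z(g^k) = 1$ and $\left\|g^k\right\|_{\mathcal{H}} \to 0$, and call this a contradiction. It is not one yet --- concluding that $\delta_z(g^k) \to 0$ from $g^k \to 0$ presupposes exactly the continuity of $\delta_z$ you are trying to prove. The standard non-circular argument uses quasi-inverses: if $\left\|g\right\| < 1$ and $\delta_z(g) = 1$, then $b = -\sum_{n \geq 1} g^n$ converges in $\mathcal{H}$ and satisfies $g + b - gb = 0$; applying the multiplicative functional $\delta_z$ gives $1 + \delta_z(b) - \delta_z(b) = 0$, a genuine contradiction. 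Since you also offer the option of simply citing the automatic-continuity theorem (as the paper does), this does not sink the proof, but the sketch as written should be replaced or deleted.
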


\begin{proof}
First, note that the evaluation functional $E_\lambda : \mathcal{H} \longrightarrow \mathbb{C}, \, f \stackrel{E_\lambda}{\mapsto} f(\lambda)$ is bounded for every $\lambda \in \Omega$, since it is a multiplicative functional on a Banach algebra $\mathcal{H}$ and hence $\left\|E_\lambda\right\| \leq 1$ (see \cite{bonsall1973}, \S16, Proposition 3, p. 77), so $\mathcal{H}$ is a reproducing kernel Hilbert space (see \cite{aronszajn1950}, \cite{paulsen2009}). Let $k_z$ denote the reproducing kernel of $\mathcal{H}$ at $z \in \Omega$. Then we have
\begin{equation}
	\left\|k_z\right\|^2_\mathcal{H} = \left|k_z(z)\right| \leq \sup_{\zeta \in \Omega} \left|k_z(\zeta)\right|.
\label{eq:bak1}
\end{equation}
Also, by the Cauchy-Schwarz inequality and the fact that $\mathcal{H}$ is a Banach algebra, we get
\begin{align*}
 \left|k_z(\zeta) \right| \left\|k_\zeta \right\|^2_\mathcal{H} &= \left|k_z(\zeta)k_\zeta(\zeta)\right| \\
&= \left|\left\langle k_z k_\zeta, \, k_\zeta \right\rangle\right| \\
&\leq \left\|k_z k_\zeta \right\|_\mathcal{H} \left\|k_\zeta\right\|_\mathcal{H} \\
&\leq \left\|k_z \right\|_\mathcal{H} \left\|k_\zeta \right\|^2_\mathcal{H},
\end{align*}
and since it holds for all $z, \zeta \in \Omega$, after canceling $\left\|k_\zeta \right\|^2_\mathcal{H}$ and taking the supremum, we get
\begin{equation}
	\sup_{\zeta \in \Omega} \left|k_z(\zeta)\right|  \leq \left\|k_z \right\|_\mathcal{H}.
\label{eq:bak2}
\end{equation}
From \eqref{eq:bak1} and \eqref{eq:bak2} we get
\begin{displaymath}
\left\|k_z \right\|^2_\mathcal{H} \leq  \sup_{\zeta \in \Omega} \left|k_z(\zeta)\right| \leq \left\|k_z \right\|_\mathcal{H}
\end{displaymath}
and consequently
\begin{displaymath}
\left\|k_z\right\|_\mathcal{H} \leq 1.
\end{displaymath}
And for any $f \in \mathcal{H}$ we also have
\begin{displaymath}
	\sup_{z \in \Omega} \left|f(z)\right| = \sup_{z \in \Omega} \left|\left\langle f, \, k_z \right\rangle\right| \leq \left\|f\right\|_\mathcal{H}.
\end{displaymath}
\end{proof}

\begin{theorem}
\label{thm:ban}
If $A^2_{(m)}$ is a Banach algebra then
\begin{displaymath}
	\int_0^\infty \frac{dt}{w_{(m)}(t)} \leq 1,
\end{displaymath}
and therefore
\begin{displaymath}
	L^2_{w_{(m)}}(0, \infty) \subseteq L^1(0, \infty) \; \; \; \; \; \text{ and } \; \; \; \; \; A^2_{(m)} \subseteq \mathscr{M}(A^2_{(m)}) \cap H^\infty(\mathbb{C}_+) \cap \mathcal{C}_0(i\mathbb{R}).
\end{displaymath}
Conversely, if for all $t>0$
\begin{equation}
	\left(\frac{1}{w_{(m)}} \ast \frac{1}{w_{(m)}}\right)(t) \leq \frac{1}{w_{(m)}(t)},
\label{eq:convo}
\end{equation}
then $A^2_{(m)}$ is a Banach algebra.
\end{theorem}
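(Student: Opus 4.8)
The plan is to prove both implications by transporting everything, via the isometry $\mathfrak{L}\colon L^2_{w_{(m)}}(0,\infty)\to A^2_{(m)}$ of Theorem~\ref{thm:mainthm}, to statements about the single weight $w_{(m)}$ on the half-line. For the first implication, suppose $A^2_{(m)}$ is a Banach algebra. Being a Hilbert space of honest (analytic) complex-valued functions on $\mathbb{C}_+$ which is an algebra under pointwise multiplication, it falls under the scope of the theorem immediately preceding, so $\|k^{A^2_{(m)}}_z\|_{A^2_{(m)}}\le 1$ for every $z\in\mathbb{C}_+$. Combining the reproducing identity $\|k^{A^2_{(m)}}_z\|^2=k^{A^2_{(m)}}_z(z)$ with the explicit formula \eqref{eq:kernel} gives $\int_0^\infty e^{-2t\re z}/w_{(m)}(t)\,dt\le 1$ for all $z$, and letting $\re z\downarrow 0$ with monotone convergence yields $\int_0^\infty dt/w_{(m)}(t)\le 1$. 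A single Cauchy--Schwarz step (splitting $|f|=(|f|\,w_{(m)}^{1/2})\cdot w_{(m)}^{-1/2}$) then shows $L^2_{w_{(m)}}(0,\infty)\subseteq L^1(0,\infty)$ contractively; hence for $F=\mathfrak{L}[f]\in A^2_{(m)}$ we get $|F(z)|\le\|f\|_{L^1}\le\|F\|_{A^2_{(m)}}$ on $\mathbb{C}_+$, so $A^2_{(m)}\subseteq H^\infty(\mathbb{C}_+)$; moreover $F$ extends continuously to $\overline{\mathbb{C}_+}$ by dominated convergence and $F(is)$ is the Fourier transform of $f\mathbf 1_{(0,\infty)}\in L^1(\mathbb{R})$, whence $F|_{i\mathbb{R}}\in\mathcal{C}_0(i\mathbb{R})$ by the Riemann--Lebesgue lemma. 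Finally $A^2_{(m)}\subseteq\mathscr{M}(A^2_{(m)})$ is immediate: closure under multiplication together with submultiplicativity gives $\|M_F\|\le\|F\|_{A^2_{(m)}}$.

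For the converse, assume \eqref{eq:convo}, take $F=\mathfrak{L}[f]$ and $G=\mathfrak{L}[g]$ with $f,g\in L^2_{w_{(m)}}(0,\infty)$, and let $f\ast g$ be the one-sided convolution on $(0,\infty)$ (which \eqref{eq:convo} also forces to be finite a.e., since it bounds $\frac{1}{w_{(m)}}\ast\frac{1}{w_{(m)}}$). The heart of the argument is a weighted Young-type estimate: writing $f(s)g(t-s)=\big(f(s)w_{(m)}(s)^{1/2}\big)\big(g(t-s)w_{(m)}(t-s)^{1/2}\big)\big(w_{(m)}(s)w_{(m)}(t-s)\big)^{-1/2}$ and applying Cauchy--Schwarz to the $s$-integral,
\begin{displaymath}
|(f\ast g)(t)|^2\,w_{(m)}(t)\;\le\;w_{(m)}(t)\Big(\tfrac{1}{w_{(m)}}\ast\tfrac{1}{w_{(m)}}\Big)(t)\cdot\big((|f|^2w_{(m)})\ast(|g|^2w_{(m)})\big)(t)\;\le\;\big((|f|^2w_{(m)})\ast(|g|^2w_{(m)})\big)(t),
\end{displaymath}
the last inequality being exactly \eqref{eq:convo}. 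Integrating in $t$ and using $\int_0^\infty(u\ast v)=\big(\int_0^\infty u\big)\big(\int_0^\infty v\big)$ for $u,v\ge 0$ yields $\|f\ast g\|_{L^2_{w_{(m)}}}\le\|f\|_{L^2_{w_{(m)}}}\|g\|_{L^2_{w_{(m)}}}$; in particular $f\ast g\in L^2_{w_{(m)}}(0,\infty)$.

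It remains to identify $\mathfrak{L}[f\ast g]$ with $FG$. Since $A^2_{(m)}$ is a reproducing kernel Hilbert space, $k^{A^2_{(m)}}_z\in A^2_{(m)}$, so for $\re z=x>0$ Cauchy--Schwarz gives $\int_0^\infty|f(t)|e^{-tx}\,dt\le\|f\|_{L^2_{w_{(m)}}}\|k^{A^2_{(m)}}_z\|<\infty$; thus $t\mapsto f(t)e^{-tx}$ and $t\mapsto g(t)e^{-tx}$ are integrable, Fubini applies to $\int_0^\infty\!\!\int_0^\infty f(s)g(u)e^{-(s+u)z}\,ds\,du$, and the substitution $t=s+u$ identifies this double integral with $\mathfrak{L}[f\ast g](z)$. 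Hence $FG=\mathfrak{L}[f\ast g]\in A^2_{(m)}$ with $\|FG\|_{A^2_{(m)}}=\|f\ast g\|_{L^2_{w_{(m)}}}\le\|F\|_{A^2_{(m)}}\|G\|_{A^2_{(m)}}$, so $A^2_{(m)}$ is a Banach algebra. I expect the main obstacle to be exactly this converse: recognising that \eqref{eq:convo} is precisely the inequality needed to close the Cauchy--Schwarz bound on $f\ast g$, and then being careful enough to pass rigorously from the pointwise bound to the norm inequality and to justify the Laplace--convolution identity for all $f,g\in L^2_{w_{(m)}}$ rather than merely on a dense subspace.
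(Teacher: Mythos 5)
Your proof is correct and follows essentially the same route as the paper: the necessity direction via the bound $\sup_z\|k^{A^2_{(m)}}_z\|\le 1$ from the preceding theorem combined with the kernel formula \eqref{eq:kernel}, Cauchy--Schwarz for the $L^1$ embedding and Riemann--Lebesgue on the boundary; and the sufficiency direction via exactly the same weighted Cauchy--Schwarz/Young estimate on the convolution. Your added care in justifying the identity $\mathfrak{L}[f\ast g]=FG$ (which the paper simply asserts as ``multiplication corresponds to convolution'') is a welcome refinement but not a different method.
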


\begin{proof}
Suppose that ${A^2_{(m)}}$ is a Banach algebra, then by the previous theorem
\begin{equation}
\int_0^\infty \frac{1}{w_{(m)}(t)} \, dt = \sup_{z \in \mathbb{C}_+}  \int_0^\infty \frac{e^{-2\re(z)t}}{w_{(m)}(t)} \, dt \stackrel{\eqref{eq:kernel}}{=} \sup_{\zeta \in \mathbb{C}_+} \left\|k^{A^2_{(m)}}_\zeta \right\|_{A^2_{(m)}} \stackrel{\eqref{eq:ker}}{\leq} 1.
\label{eq:bak5}
\end{equation}
By H\"{o}lder's inequality we also get
\begin{displaymath}
\left| \int_0^\infty f(t) e^{-tz} \, dt \right| \leq \int_0^\infty \left|f(t)\right| \, dt \stackrel{\eqref{eq:bak5}}{\leq} \left(\int_0^\infty \left|f(t)\right|^2 w_{(m)}(t) \, dt\right)^{\frac{1}{2}},
\end{displaymath}
and on the boundary
\begin{displaymath}
	F(\im(z)) = \int_0^\infty f(t)e^{-i\im(z)t} \, dt \in \mathcal{C}_0(i\mathbb{R}).
\end{displaymath}
The converse follows from the fact that multiplication in $A^2_{(m)}$ is equivalent to convolution in $L^2_{w_{(m)}}(0, \infty)$, for which the sufficient condition to be a Banach algebra was given in \cite{nikolskii1970} and in \cite{acta1995} (Lemma 8.11) and its proof is quoted here. Suppose that \eqref{eq:convo} holds for all $t>0$. Using H\"{o}lder's inequality and that $(L^1(0, \infty), \ast)$ is a Banach algebra \cite{dales2000}, we get
\begin{align*}
\left\|f \ast g \right\|^2_{L^2_{w_{(m)}}(0, \infty)} &= \int_0^\infty \left| \int_0^t f(\tau)g(t-\tau) \, d\tau\right|^2 w_{(m)}(t) \, dt \\
&\leq \int_0^\infty \int_0^t \left|f(\tau)\right|^2 w_{(m)}(\tau) \left|g(t-\tau)\right|^2 w_{(m)}(t-\tau) \, d\tau \\
& \; \; \; \; \; \; \times \int_0^t \frac{d\tau}{w_{(m)}(\tau) w_{(m)}(t-\tau)} \, w_{(m)}(t) \, dt \\
&= \int_0^\infty (\left|f\right|^2 w_{(m)} \ast \left|g\right|^2 w_{(m)})(t) \, \left( \frac{1}{w_{(m)}} \ast \frac{1}{w_{(m)}} \right)(t) \, w_{(m)}(t) \, dt \\
&\stackrel{\eqref{eq:convo}}{\leq} \left\| \left|f\right|^2 w_{(m)} \right\|_{L^1(0, \infty)} \left\| \left|g\right|^2 w_{(m)} \right\|_{L^1(0, \infty)} \\
&=\left\|f\right\|^2_{L^2_{w_{(m)}}(0, \infty)} \left\|g\right\|^2_{L^2_{w_{(m)}}(0, \infty)}
\end{align*}
for all $f, g$ in $L^2_{w_{(m)}}(0, \infty)$, and hence $A^2_{(m)}$ is a Banach algebra.
\end{proof}

\begin{example}
$H^2(\mathbb{C}_+), \, \mathcal{B}^2_\alpha (\mathbb{C}_+)$ are not Banach algebras. In fact, no Zen space can be a Banach algebra, since
\begin{displaymath}
	w_0(t) \defeq 2\pi \int_0^\infty e^{-2rt} \, d\tilde{\nu}_0 (r)
\end{displaymath}
is a decreasing function. $\mathcal{D} (\mathbb{C}_+)$ is not a Banach algebra either. The above necessary condition is a good tool in disqualifying given $A^2_{(m)}$ from being a Banach algebra. The sufficient condition is somehow less useful in producing examples of Banach algebras and it was not even clear if they existed. They do, and there is an alternative way to produce them.
\end{example}

\begin{theorem}
\label{thm:bal} ~
\begin{enumerate}
	\item $A^2_{\nu} \cap H^\infty(\mathbb{C}_+)$ is a Banach algebra with norm given by
\begin{displaymath}
	\left\|F\right\|_{A^2_\nu \cap H^\infty(\mathbb{C}_+)} := \left\| F \right\|_{H^\infty(\mathbb{C}_+)} + \left\|F\right\|_{A^2_\nu} \; \; \; \; \; (\forall F \in A^2_{\nu} \cap H^\infty(\mathbb{C}_+)).
\end{displaymath}
	\item 
	Suppose that for all $1 \leq k < n \leq m - 1 < \infty$ and all $t \geq 0$ we have
	\begin{equation}
		\int_0^\infty e^{-2rt} \, d\tilde{\nu}_n (r) \leq \int_0^\infty e^{-2rt} \, d\tilde{\nu}_{n-k} (r),
	\label{eq:measure}
	\end{equation}
then
	\begin{displaymath}
		\Alg_m := \bigcap_{n=0}^{m-1} \left\{ F \in A^2_{(m)} \, : \, F^{(n)} \in H^\infty(\mathbb{C}_+) \right\}
	\end{displaymath}
is a Banach algebra with respect to the norm given by
	\begin{displaymath}
		\left\|F \right\|_{\Alg_m} : = \sum_{n=0}^{m-1} \frac{\left\|F^{(n)}\right\|_{H^\infty(\mathbb{C}_+)}}{n!} + \sum_{n=0}^{m} \frac{\left\|F^{(n)}\right\|_{A^2_{\nu_n}}}{n!}.
	\end{displaymath}
\end{enumerate}

\end{theorem}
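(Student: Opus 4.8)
In each of the two parts the only things that are not automatic are that the stated norm is \emph{complete} and that it is \emph{submultiplicative}; everything else (the algebra being a vector space closed under pointwise multiplication, the norm axioms) is immediate. I would treat Part 1 first, both because it is short and because its key pointwise estimate is reused in Part 2.

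\textbf{Part 1.} For completeness, a sequence that is Cauchy in $\|\cdot\|_{H^\infty(\mathbb{C}_+)}+\|\cdot\|_{A^2_\nu}$ is Cauchy in each summand, hence converges uniformly on $\mathbb{C}_+$ to some $G\in H^\infty(\mathbb{C}_+)$ and in $A^2_\nu$-norm to some $\widetilde G\in A^2_\nu$; since point evaluations on a Zen space are bounded, $A^2_\nu$-convergence is pointwise, so $G=\widetilde G$ and the convergence holds in the sum norm. For submultiplicativity, given $F,G\in A^2_\nu\cap H^\infty(\mathbb{C}_+)$ the product $FG$ is analytic, $\|FG\|_{H^\infty(\mathbb{C}_+)}\le\|F\|_{H^\infty(\mathbb{C}_+)}\|G\|_{H^\infty(\mathbb{C}_+)}$, and from the pointwise bound $|F(z+\varepsilon)G(z+\varepsilon)|\le\|G\|_{H^\infty(\mathbb{C}_+)}|F(z+\varepsilon)|$ (valid since $z+\varepsilon\in\mathbb{C}_+$), integrating against $d\nu$ and taking $\sup_{\varepsilon>0}$, we get $\|FG\|_{A^2_\nu}\le\|G\|_{H^\infty(\mathbb{C}_+)}\|F\|_{A^2_\nu}$. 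Adding the two estimates gives $\|FG\|_{A^2_\nu\cap H^\infty(\mathbb{C}_+)}\le\|G\|_{H^\infty(\mathbb{C}_+)}\bigl(\|F\|_{H^\infty(\mathbb{C}_+)}+\|F\|_{A^2_\nu}\bigr)\le\|F\|_{A^2_\nu\cap H^\infty(\mathbb{C}_+)}\|G\|_{A^2_\nu\cap H^\infty(\mathbb{C}_+)}$, with constant exactly $1$.

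\textbf{Part 2, set-up.} Since $w_n\le w_{(m)}$ pointwise, every $F\in A^2_{(m)}$ satisfies $F^{(n)}\in A^2_{\nu_n}$ for all $0\le n\le m$, so $\|\cdot\|_{\Alg_m}$ is finite on $\Alg_m$; completeness is proved exactly as in Part 1, now using that a Cauchy sequence has $F_j^{(n)}$ converging in $H^\infty(\mathbb{C}_+)$ for $n\le m-1$ and in $A^2_{\nu_n}$ for $n\le m$, and reconciling all the limits by pointwise convergence plus Weierstrass's theorem on locally uniform limits of analytic functions (which also identifies $\lim_j F_j^{(n)}$ with $(\lim_j F_j)^{(n)}$). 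For submultiplicativity write $\|F\|_{\Alg_m}=P(F)+Q(F)$ with $P(F):=\sum_{n=0}^{m-1}\|F^{(n)}\|_{H^\infty(\mathbb{C}_+)}/n!$ and $Q(F):=\sum_{n=0}^{m}\|F^{(n)}\|_{A^2_{\nu_n}}/n!$, and expand by the Leibniz rule $(FG)^{(n)}=\sum_{k=0}^n\binom nk F^{(k)}G^{(n-k)}$. Dividing by $n!$ turns $\binom nk$ into $\tfrac1{k!(n-k)!}$, whence
\[ \frac{\|(FG)^{(n)}\|_{H^\infty(\mathbb{C}_+)}}{n!}\le\sum_{k=0}^n\frac{\|F^{(k)}\|_{H^\infty(\mathbb{C}_+)}}{k!}\cdot\frac{\|G^{(n-k)}\|_{H^\infty(\mathbb{C}_+)}}{(n-k)!}; \]
summing over $0\le n\le m-1$ and bounding the truncated Cauchy product by the full product gives $P(FG)\le P(F)P(G)$. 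This is precisely what the weights $1/n!$ buy us. For $Q$, in each Leibniz term $F^{(k)}G^{(n-k)}$ with $0\le k\le n\le m$ at least one factor is a derivative of order $\le m-1$; pull out its $H^\infty(\mathbb{C}_+)$-norm, leaving the $A^2_{\nu_n}$-norm of the other factor. If that factor is $G^{(n)}$ (the case $k=0$) it already lives in $A^2_{\nu_n}$; otherwise its order is $<n$, and here the hypothesis \eqref{eq:measure} enters: it says exactly that the Laplace-transform weight $2\pi\int_0^\infty e^{-2rt}\,d\tilde\nu_n(r)$ of $A^2_{\nu_n}$ is dominated by that of $A^2_{\nu_{n-k}}$, so via the isometry of Theorem \ref{thm:mainthm} one gets the contractive comparison $\|G^{(n-k)}\|_{A^2_{\nu_n}}\le\|G^{(n-k)}\|_{A^2_{\nu_{n-k}}}$ (every function occurring here lies in the image of the Laplace transform). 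Grouping the terms according to whether the $F$-index $k$ equals $0$, equals $n$, or lies strictly in between, and using the same $1/n!$-bookkeeping, yields $Q(FG)\le P(F)Q(G)+P(G)Q(F)$. Adding this to $P(FG)\le P(F)P(G)$ gives $\|FG\|_{\Alg_m}\le\bigl(P(F)+Q(F)\bigr)\bigl(P(G)+Q(G)\bigr)=\|F\|_{\Alg_m}\|G\|_{\Alg_m}$; in particular $(FG)^{(n)}\in H^\infty(\mathbb{C}_+)$ for $n\le m-1$ and $(FG)^{(n)}\in A^2_{\nu_n}$ for $n\le m$, so $FG\in\Alg_m$.

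\textbf{Main obstacle.} The crux is keeping $FG$ inside $A^2_{(m)}$: the top-order derivative $(FG)^{(m)}$ produces mixed products $F^{(k)}G^{(m-k)}$ with $1\le k\le m-1$, in which neither factor is a priori controlled in $A^2_{\nu_m}$, and it is exactly the hypothesis \eqref{eq:measure}, read through Theorem \ref{thm:mainthm} as a comparison of Zen-space weights, that lets one trade the $A^2_{\nu_m}$-norm of a lower-order derivative for the norm in the appropriate $A^2_{\nu_{n-k}}$ and thereby close the estimate; one should check that the index range in \eqref{eq:measure} is wide enough to cover all these mixed terms. The secondary, purely bookkeeping, difficulty is obtaining the constant exactly $1$ rather than merely a finite one — that is what dictates the normalisation by $1/n!$ and what makes the Cauchy-product and grouping computations in Part 2 come out precisely.
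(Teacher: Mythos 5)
Your proof follows essentially the same route as the paper's: the same two submultiplicativity estimates in Part 1, and in Part 2 the same Leibniz expansion with the $1/n!$ normalisation, pulling out the $H^\infty(\mathbb{C}_+)$-norm of the lower-order factor and using \eqref{eq:measure} (read as a comparison of the Laplace-transform weights) to place the remaining factor in $A^2_{\nu_{n-k}}$, then recognising a Cauchy product. Your caveat about the index range is well taken: the argument needs the comparison $\int_{\mathbb{C}_+}|F^{(n-k)}|^2\,d\nu_n\le\int_{\mathbb{C}_+}|F^{(n-k)}|^2\,d\nu_{n-k}$ for $n$ up to $m$ (the top-order Leibniz terms), whereas \eqref{eq:measure} as literally stated only covers $n\le m-1$, so the hypothesis should be understood with the wider range — the paper's own proof uses it in exactly that form.
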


\begin{proof}
Those are clearly Banach spaces. For all $F$ and $G$ in $A^2_\nu \cap H^\infty(\mathbb{C}_+)$
\begin{align*}
\left\|FG\right\|_{A^2_\nu \cap H^\infty(\mathbb{C}_+)} &\! \defeq  \left\| FG \right\|_{H^\infty(\mathbb{C}_+)} + \left\|FG\right\|_{A^2_\nu} \\
& \leq \left\| F \right\|_{H^\infty(\mathbb{C}_+)} \left\| G \right\|_{H^\infty(\mathbb{C}_+)} + \left\| F \right\|_{H^\infty(\mathbb{C}_+)} \left\|G\right\|_{A^2_\nu} \\
& \leq \left(\left\| F \right\|_{H^\infty(\mathbb{C}_+)} + \left\|F\right\|_{A^2_\nu} \right)\left(\left\| G \right\|_{H^\infty(\mathbb{C}_+)} + \left\|G\right\|_{A^2_\nu}\right) \\
&\! \defeq \left\|F\right\|_{A^2_\nu \cap H^\infty(\mathbb{C}_+)} \left\|G\right\|_{A^2_\nu \cap H^\infty(\mathbb{C}_+)},
\end{align*}
proving 1. To prove 2., let $F$ and $G$ be in $\Alg_m$, and let
\begin{align*}
f_n &= \frac{\left\|F^{(n)}\right\|_{H^\infty(\mathbb{C}_+)}}{n!} \; \; \; \text{for } 0 \leq n < m \; \; \; \text{ and } \; \; f_m = 0, \\
f'_n &= \frac{\left\|F^{(n)}\right\|_{A^2_{\nu_n}}}{n!}, \\
g_n &= \frac{\left\|G^{(n)}\right\|_{H^\infty(\mathbb{C}_+)}}{n!} \; \; \; \text{for } 0 \leq n < m \; \; \; \text{ and } \; \; g_m = 0, \\
g'_n &= \frac{\left\|G^{(n)}\right\|_{A^2_{\nu_n}}}{n!}.
\end{align*}
Then \eqref{eq:measure} implies
\begin{displaymath}
\int_{\mathbb{C}_+} \left|F^{(n-k)}\right|^2 \, d\nu_n \leq \int_{\mathbb{C}_+} \left|F^{(n-k)}\right|^2 \, d\nu_{n-k},	
\end{displaymath}
and then
\begin{align*}
\left\|FG \right\|_{\Alg_m} &\!\defeq \sum_{n=0}^{m-1} \frac{\left\|(FG)^{(n)}\right\|_{H^\infty(\mathbb{C}_+)}}{n!} + \sum_{n=0}^{m} \frac{\left\|(FG)^{(n)}\right\|_{A^2_{\nu_n}}}{n!} \\
&\leq \sum_{n=0}^{m-1} \frac{1}{n!} \sum_{k=0}^n \binom{n}{k} \left\|F^{(n-k)}\right\|_{H^\infty(\mathbb{C}_+)} \left\|G^{(k)}\right\|_{H^\infty(\mathbb{C}_+)} \\
&+ \sum_{n=0}^m \frac{1}{n!} \sum_{k=0}^n \binom{n}{k} \left(\int_{\mathbb{C}_+} \left|F^{(n-k)} G^{(k)} \right|^2 \, d\nu_n \right)^{1/2} \\
&\leq \sum_{n=0}^{m-1} \sum_{k=0}^n \frac{\left\|F^{(n-k)}\right\|_{H^\infty(\mathbb{C}_+)}}{(n-k)!} \frac{\left\|G^{(k)}\right\|_{H^\infty(\mathbb{C}_+)}}{k!} + \left\|F\right\|_{A^2_{\nu_0}} \left\|G\right\|_{H^\infty(\mathbb{C}_+)} \\
& \; \; \; \; \; + \sum_{n=1}^m \frac{1}{n!} \sum_{k=0}^{n-1} \binom{n}{k} \left\|F^{(n-k)}\right\|_{A^2_{\nu_{n-k}}} \left\|G^{(k)}\right\|_{H^\infty(\mathbb{C}_+)} \\
& \; \; \; \; \; + \sum_{n=1}^m \frac{1}{n!} \left\|F\right\|_{H^\infty(\mathbb{C}_+)} \left\|G^{(n)}\right\|_{A^2_{\nu_n}} \\
&= \sum_{n=0}^{m-1} \sum_{k=0}^n f_{n-k}g_k + f'_0 g_0 + \sum_{n=1}^{m} \sum_{k=0}^{n-1} f'_{n-k}g_k + f_0 \sum_{n=1}^{m} g'_n \\
&\leq \sum_{n=0}^m \sum_{k=0}^n (f_{n-k}g_k +  f'_{n-k}g_k +  f_{n-k}g'_k +  f'_{n-k}g'_k) \\
&= \left[\sum_{n=0}^m \left(f_n + f'_n\right)\right]\left[\sum_{n=0}^m \left(g_n + g'_n\right)\right] \\
&\! \defeq \left\|F\right\|_{\Alg_m} \left\|G\right\|_{\Alg_m},
\end{align*}
as required.
\end{proof}

\begin{theorem}
\label{thm:bana}
Let $m \in \mathbb{N}$ and let $(\nu_n)_{n=0}^m$ be a finite sequence of positive regular Borel measures. Suppose that for all $1 \leq k < n \leq m - 1$ and all $t \geq 0$ \eqref{eq:measure} holds (up to a constant). If
\begin{equation}
\int_0^\infty \frac{dt}{w_{m-1}(t)+w_m(t)} \leq 1
\label{eq:wbounded}
\end{equation}
then there exists a constant $C>0$ such that $\left(A^2_{(m)}, C\left\|\cdot\right\|_{A^2_{(m)}}\right)$ is a Banach algebra.
\end{theorem}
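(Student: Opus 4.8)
The plan is to imitate the proof of Theorem~\ref{thm:ban}: carry everything over to $L^2_{w_{(m)}}(0,\infty)$ via the isometry $\mathfrak{L}$ (under which pointwise multiplication becomes convolution), and show that $1/w_{(m)}$ is \emph{sub-convolutive up to a constant}, i.e. that for some $C\ge 1$ and all $t>0$
\begin{displaymath}
	\left(\frac{1}{w_{(m)}} \ast \frac{1}{w_{(m)}}\right)(t) \le \frac{C^2}{w_{(m)}(t)}.
\end{displaymath}
Granting this, the Cauchy--Schwarz/Fubini computation already carried out in the proof of Theorem~\ref{thm:ban}, run with $C^2$ in place of $1$, gives $\|f\ast g\|_{L^2_{w_{(m)}}} \le C\|f\|_{L^2_{w_{(m)}}}\|g\|_{L^2_{w_{(m)}}}$ for all $f,g$ (the inclusion $L^2_{w_{(m)}}(0,\infty) \subseteq L^1(0,\infty)$ needed to invoke the convolution theorem follows from $w_{(m)} \ge w_{m-1}+w_m$ and \eqref{eq:wbounded}). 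Since $\mathfrak{L}$ is an onto isometry onto $A^2_{(m)}$ with $\mathfrak{L}[f]\,\mathfrak{L}[g] = \mathfrak{L}[f\ast g]$, this reads $\|FG\|_{A^2_{(m)}} \le C\|F\|_{A^2_{(m)}}\|G\|_{A^2_{(m)}}$, so $A^2_{(m)}$ is closed under multiplication and $\left(A^2_{(m)}, C\|\cdot\|_{A^2_{(m)}}\right)$ is a Banach algebra.

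The first ingredient is an elementary scaling inequality for each weight $w_n$, $0\le n\le m$: for all $a,b\ge 0$,
\begin{displaymath}
	w_n(a+b) \le 2^{2n}\, w_n\!\left(\max\{a,b\}\right),
\end{displaymath}
which follows at once from $(a+b)^{2n}\le 2^{2n}\max\{a,b\}^{2n}$ and $e^{-2r(a+b)}\le e^{-2r\max\{a,b\}}$ substituted into $w_n(t)=2\pi t^{2n}\int_0^\infty e^{-2rt}\,d\tilde{\nu}_n(r)$.

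Using this, I would write $w_{(m)}(t)=\sum_{n=0}^m w_n(t)$ and bound, for each fixed $n$, the integral $\int_0^t \frac{w_n(t)}{w_{(m)}(s)\,w_{(m)}(t-s)}\,ds$: on $0\le s\le t/2$ one has $\max\{s,t-s\}=t-s$, hence $w_n(t)\le 2^{2n}w_n(t-s)\le 2^{2n}w_{(m)}(t-s)$, so the integrand is $\le 2^{2n}/w_{(m)}(s)$; on $t/2\le s\le t$ the symmetric estimate gives integrand $\le 2^{2n}/w_{(m)}(t-s)$. Integrating and substituting $u=t-s$ in the second piece, each such integral is at most $2^{2n+1}\int_0^{t/2} ds/w_{(m)}(s)\le 2^{2n+1}\int_0^\infty ds/w_{(m)}(s)$. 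Summing over $n$ gives $w_{(m)}(t)\left(\tfrac{1}{w_{(m)}}\ast\tfrac{1}{w_{(m)}}\right)(t)\le \left(\sum_{n=0}^m 2^{2n+1}\right)\int_0^\infty ds/w_{(m)}(s)$, and since $w_{(m)}\ge w_{m-1}+w_m$, the hypothesis \eqref{eq:wbounded} yields $\int_0^\infty ds/w_{(m)}(s)\le 1$, whence the required bound with $C:=\left(\sum_{n=0}^m 2^{2n+1}\right)^{1/2}$.

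The main obstacle is exactly this uniform-in-$t$ control of $w_{(m)}(t)\left(\tfrac{1}{w_{(m)}}\ast\tfrac{1}{w_{(m)}}\right)(t)$; the scaling inequality of the second step is the device that collapses it to the mere finiteness of $\int_0^\infty ds/w_{(m)}(s)$, i.e. to \eqref{eq:wbounded}. I note that this route does not appear to use the measure comparison \eqref{eq:measure}; if one prefers to bring it in, the alternative would be to prove $A^2_{(m)}\subseteq\Alg_m$ continuously (so that $A^2_{(m)}$ inherits closure under products from Theorem~\ref{thm:bal}) and then compare the two norms, but the $H^\infty$-bounds on $F^{(n)}$ for $n\le m-1$ required for that inclusion seem to need more than \eqref{eq:wbounded}, so I would favour the convolution argument above.
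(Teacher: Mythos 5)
Your proof is correct, but it takes a genuinely different route from the paper's. The paper argues on the function-space side: it introduces the truncated spaces $B^2_{(m-n)}$ and their reproducing kernels to deduce from \eqref{eq:wbounded} that $\left\|F^{(n)}\right\|_{H^\infty(\mathbb{C}_+)}\lessapprox\left\|F\right\|_{A^2_{(m)}}$ for $0\le n\le m-1$, and then feeds this into the submultiplicativity estimate of Theorem \ref{thm:bal}.2 for $\Alg_m$ --- which is precisely where the hypothesis \eqref{eq:measure} is consumed. You instead work on the $L^2_{w_{(m)}}(0,\infty)$ side and verify the sufficient condition \eqref{eq:convo} of Theorem \ref{thm:ban} up to a constant, via the elementary scaling bound $w_n(a+b)\le 4^n\,w_n(\max\{a,b\})$ (which is valid: $(a+b)^{2n}\le 2^{2n}\max\{a,b\}^{2n}$ and $e^{-2r(a+b)}\le e^{-2r\max\{a,b\}}$) together with the splitting of $\int_0^t$ at $t/2$; the hypothesis \eqref{eq:wbounded} then enters only through $\int_0^\infty ds/w_{(m)}(s)\le\int_0^\infty ds/(w_{m-1}(s)+w_m(s))\le 1$. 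Your route buys three things: it never uses \eqref{eq:measure}, so it proves the theorem under strictly weaker hypotheses; it produces an explicit constant $C$; and it shows that \eqref{eq:wbounded} by itself forces $1/w_{(m)}$ to satisfy \eqref{eq:convo} after rescaling, which clarifies the relationship between the necessary and sufficient conditions of Theorem \ref{thm:ban}. What the paper's route buys is a structural statement --- a continuous embedding $A^2_{(m)}\hookrightarrow\Alg_m$, i.e. $H^\infty$-control of the lower-order derivatives --- which is of independent interest and is reused elsewhere. The only points you should make explicit when writing this up are that $A^2_{(m)}=\mathfrak{L}\left(L^2_{w_{(m)}}(0,\infty)\right)$ by definition (so the transfer to convolution is lossless) and that $\mathfrak{L}[f\ast g]=\mathfrak{L}[f]\,\mathfrak{L}[g]$ is justified because $L^2_{w_{(m)}}(0,\infty)\subseteq L^1(0,\infty)$ under \eqref{eq:wbounded}; both of which you have already noted.
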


\begin{proof}
Given $0 \leq n \leq m$, let
	\begin{displaymath}
		B^2_{(m-n)} = \left\{ G: \mathbb{C}_+ \longrightarrow \mathbb{C} \, \text{analytic : }\left\|G\right\|^2_{B^2_{(m-n)}} = \sum_{k=0}^{m-n} \int_{\mathbb{C}_+} \left|G^{(k)} \right|^2 d\nu_{n+k} < \infty \right\},
	\end{displaymath}
(so it is a truncated $A^2_{(m)}$ space, with first $n$ measures removed). Note that if $F \in A^2_{(m)}$, then $F^{(n)}$ is in $B^2_{(m-n)}$ and for all $z \in \mathbb{C}_+$
\begin{displaymath}
\left|F^{(n)}(z)\right|^2 = \left|\left\langle F^{(n)}, \, k_z^{B^2_{(m-n)}} \right\rangle\right|^2 \stackrel{\eqref{eq:kernel}}{\leq} \left\|F^{(n)}\right\|^2_{B^2_{(m-n)}} \int_0^\infty \frac{e^{-2t\re(z)}}{w_n(t)+ \ldots + w_m(t)} \, dt,
\end{displaymath}
so clearly
\begin{displaymath}
\left\|F^{(n)}\right\|^2_{H^\infty(\mathbb{C}_+)} \leq \left\|F^{(n)}\right\|^2_{B^2_{(m-n)}} \int_0^\infty \frac{e^{-2t\re(z)}}{w_{m-1}(t) + w_m(t)} \, dt \stackrel{\eqref{eq:wbounded}}{\lessapprox} \left\|F\right\|^2_{A^2_{(m)}}
\end{displaymath}
for all $0 \leq n \leq m-1$. Now let $F, \, G \in A^2_{(m)}$ and let $d\nu_n':= n!d\nu_n$. Then for any $0 \leq k \leq m$
\begin{align*}
\left\|(FG)^{(k)}\right\|^2_{A^2_{\nu_k}} &\leq \sum_{n=1}^{m-1} \frac{\|(FG)^{(n)}\|_{H^\infty(\mathbb{C}_+)}}{n!}+\sum_{n=1}^m \frac{\|(FG)^{(n)}\|_{A^2_{\nu_n'}}}{n!} \\
&\!\!\!\! \stackrel{\thm\ref{thm:bal}}{\leq} \left(\sum_{n=1}^{m-1} \frac{\|F^{(n)}\|_{H^\infty(\mathbb{C}_+)}}{n!}+\sum_{n=1}^m \frac{\|F^{(n)}\|_{A^2_{\nu_n'}}}{n!}\right) \\
&\; \; \, \, \times \left(\sum_{n=1}^{m-1} \frac{\|G^{(n)}\|_{H^\infty(\mathbb{C}_+)}}{n!}+\sum_{n=1}^m \frac{\|G^{(n)}\|_{A^2_{\nu_n'}}}{n!}\right) \\
&\lessapprox \left(\sum_{n=1}^m \|F^{(n)}\|_{A^2_{\nu_n}}\right) \left(\sum_{n=1}^m \|G^{(n)}\|_{A^2_{\nu_n}}\right) \\
&\lessapprox \|F\|_{A^2_{(m)}} \|G\|_{A^2_{(m)}},
\end{align*}
summing the square of the above expression over all $k$ between 0 and $m$ and taking the square roots proves the claim. Or, to be precise, by multiplying the weights by appropriate constants, we can assure that $A^2_{(m)}$ is a Banach algebra.
\end{proof}

\begin{corollary}
$A^2_{(1)}$ is a Banach algebra (after possibly adjusting its norm/weights) if and only if
\begin{displaymath}
	\int_0^\infty \frac{dt}{w_{(1)}(t)} \leq \infty.
\end{displaymath}
\end{corollary}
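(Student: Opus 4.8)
The plan is to read this off from Theorems~\ref{thm:ban} and~\ref{thm:bana}, after noting that ``adjusting the norm or weights'' of $A^2_{(1)}$ means precisely replacing $w_{(1)}$ by $\lambda w_{(1)}$ for some constant $\lambda>0$: via the Laplace isometry $\mathfrak{L}\colon L^2_{w_{(1)}}(0,\infty)\to A^2_{(1)}$, multiplying $\|\cdot\|_{A^2_{(1)}}$ by $C$ is the same as using the weight $C^2 w_{(1)}$, whereas multiplying each $\tilde{\nu}_n$ by $\lambda$ replaces $w_0,w_1$ by $\lambda w_0,\lambda w_1$ and hence $w_{(1)}=w_0+w_1$ by $\lambda w_{(1)}$. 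In either case the $(\Delta_2)$-condition on the measures is unaffected and $\int_0^\infty dt/w_{(1)}(t)$ is merely divided by the scaling factor, so it is finite for one choice of normalisation if and only if it is finite for all of them.

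For necessity, suppose that after such a rescaling, say with $w_{(1)}$ replaced by $\lambda w_{(1)}$, the resulting space is a Banach algebra. Applying Theorem~\ref{thm:ban} to that space (whose weight is $\lambda w_{(1)}$) yields $\int_0^\infty dt/(\lambda w_{(1)}(t))\le 1$, so $\int_0^\infty dt/w_{(1)}(t)\le\lambda<\infty$; the limiting argument $\re z\to 0^+$ (monotone convergence applied to the reproducing kernel norm) that turns the kernel bound into this integral estimate is already carried out inside the proof of Theorem~\ref{thm:ban}.

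For sufficiency, assume $L:=\int_0^\infty dt/w_{(1)}(t)<\infty$ and replace $\tilde{\nu}_0,\tilde{\nu}_1$ by $L\tilde{\nu}_0,L\tilde{\nu}_1$. These are again positive regular Borel measures satisfying $(\Delta_2)$, the new space is linearly isomorphic to $A^2_{(1)}$ with an equivalent norm, and its weight $Lw_{(1)}$ satisfies $\int_0^\infty dt/(Lw_{(1)}(t))=1$. Now apply Theorem~\ref{thm:bana} with $m=1$: the hypothesis~\eqref{eq:measure} is only required for indices $1\le k<n\le m-1=0$, of which there are none, so it holds vacuously, while~\eqref{eq:wbounded} becomes $\int_0^\infty dt/(w_0(t)+w_1(t))=\int_0^\infty dt/(Lw_{(1)}(t))=1\le 1$. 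Theorem~\ref{thm:bana} then provides $C>0$ such that $(A^2_{(1)},\,C\|\cdot\|_{A^2_{(1)}})$ is a Banach algebra, which is exactly the claimed conclusion.

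I do not anticipate a genuine difficulty here: the corollary is essentially bookkeeping on top of the two theorems, and the only points deserving care are (a) the remark that rescaling the norm and rescaling the weights amount to the same operation and both preserve $(\Delta_2)$, and (b) the observation that for $m=1$ the index set in the hypothesis of Theorem~\ref{thm:bana} is empty, so that the sole substantive requirement collapses to~\eqref{eq:wbounded}, i.e.\ to the finiteness (after normalisation) of $\int_0^\infty dt/w_{(1)}(t)$.
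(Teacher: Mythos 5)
Your proposal is correct and follows the same route as the paper, whose proof simply cites Theorems \ref{thm:ban} and \ref{thm:bana}; you have merely filled in the bookkeeping (the vacuousness of \eqref{eq:measure} for $m=1$, and the fact that rescaling the weights preserves $(\Delta_2)$ and turns finiteness of $\int_0^\infty dt/w_{(1)}(t)$ into the normalised bound \eqref{eq:wbounded}). No gaps.
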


\begin{proof}
It follows from Theorems \ref{thm:ban} and \ref{thm:bana}.
\end{proof}

\begin{example}
If $\tilde{\nu}_0 = \tilde{\nu}_1 = \delta_0$, then $A^2_{(1)}$ (that is a Hardy-Sobolev space) is a Banach algebra, since
\begin{displaymath}
	\frac{1}{2\pi} \int_0^\infty \frac{dt}{1+t^2} = \frac{1}{4}.
\end{displaymath}
It is easy to see that no adjustment in norm is necessary, as for any $F, G \in A^2_{(1)}$ we have
\begin{align*}
\|FG\|^2_{A^2_{\nu_0}}+\|(FG)'\|^2_{A^2_{\nu_1}} &\leq \frac{\|F\|^2_{H^\infty(\mathbb{C}_+)}\|G\|^2_{A^2_{\nu_0}}+\|G\|^2_{H^\infty(\mathbb{C}_+)}\|F\|^2_{A^2_{\nu_0}}}{2} \\
&\quad + 2\|F\|^2_{H^\infty(\mathbb{C}_+)}\|G'\|2_{A^2_{\nu_1}}+2\|G\|^2_{H^\infty(\mathbb{C}_+)}\|F'\|^2_{A^2_{\nu_1}} \\
&\leq \|F\|^2_{H^\infty(\mathbb{C}_+)} \left(\frac{\|G\|^2_{A^2_{\nu_0}}}{2}+2\|G'\|2_{A^2_{\nu_1}}\right) \\
&\quad +\|G\|^2_{H^\infty(\mathbb{C}_+)} \left(\frac{\|F\|^2_{A^2_{\nu_0}}}{2}+2\|F'\|2_{A^2_{\nu_1}}\right) \\
&\leq \frac{1}{4}\|F\|^2_{A^2_{(1)}} \cdot 2 \|G\|^2_{A^2_{(1)}}+\frac{1}{4}\|G\|^2_{A^2_{(1)}} \cdot 2 \|F\|^2_{A^2_{(1)}} \\
&=\|F\|^2_{A^2_{(1)}} \|G\|^2_{A^2_{(1)}}
\end{align*}
\end{example}

\begin{example}
If $\tilde{\nu}_0 = \delta_0$ and $d\tilde{\nu}_1(r) = r^\alpha dr \; (-1<\alpha<0)$, then $A^2_{(1)}$ is a Banach algebra.
\end{example}

\section{Spectra and ideals}
\label{sec:specideals}

Recall that the \emph{spectrum of an element $a$ of an algebra $A$ over $\mathbb{C}$} is the set
\begin{displaymath}
	\sigma(A,a) := \left\{\lambda \in \mathbb{C} \; : \; (a-\lambda)^{-1} \notin A \right\}
\end{displaymath}
if $A$ is unital, and
\begin{displaymath}
	\sigma(A,a) := \left\{0\right\} \cup \left\{\lambda \in \mathbb{C} \; : \; a+\lambda b -ab \neq 0, \; \forall b \in A \right\}.
\end{displaymath}
otherwise. The \emph{spectral radius, r(a), of a} is defined by
\begin{displaymath}
	r(a) : = \sup \left\{\lambda \in \sigma(A,a) \right\}.
\end{displaymath}
It is well known that
\begin{displaymath}
	\sup_{\varphi \in \mathfrak{M}(A)}\left|\phi(a)\right| = r(a),
\end{displaymath}
where $\mathfrak{M}(A)$ is the maximal ideal space of $A$, i.e. the set of algebra homomorphisms defined on $A$ (for details, see for example \cite{bonsall1973}).

\begin{theorem} ~
\begin{enumerate}
	\item If $h \in \mathscr{M}(A^2_{(m)})$, then 
	\begin{displaymath}
		\overline{h(\mathbb{C}_+)} \subseteq \sigma(\mathscr{M}(A^2_{(m)}), h),
	\end{displaymath}
	with equality at least for $m~\leq~1$.
	\item If $F \in A^2_{(m)}$, then $F^{-1} \notin A^2_{(m)}$.
\end{enumerate}
\end{theorem}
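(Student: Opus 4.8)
The plan is to prove the two statements separately, each time exploiting that $A^2_{(m)}$ is a reproducing kernel Hilbert space (Section~\ref{sec:pre}): the kernel $k_z^{A^2_{(m)}}$ of \eqref{eq:kernel} lies in $A^2_{(m)}$, and $\bigl\|k_z^{A^2_{(m)}}\bigr\|_{A^2_{(m)}}^2 = k_z^{A^2_{(m)}}(z) = \int_0^\infty e^{-2t\re z}/w_{(m)}(t)\,dt$, which is finite for every $z \in \mathbb{C}_+$. For the inclusion in Part~1, which holds for all $m$, recall that $\mathscr{M}(A^2_{(m)})$ is a unital Banach algebra under pointwise multiplication (Section~\ref{sec:ba}). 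Fix $z_0 \in \mathbb{C}_+$ and put $\lambda = h(z_0)$. If $h-\lambda$ were invertible in $\mathscr{M}(A^2_{(m)})$ with inverse $g$, then $g$ would be analytic on $\mathbb{C}_+$ and $g(z)\bigl(h(z)-\lambda\bigr)=1$ for every $z$; taking $z=z_0$ gives $g(z_0)\cdot 0 = 1$, a contradiction. Hence $h(\mathbb{C}_+)\subseteq\sigma(\mathscr{M}(A^2_{(m)}),h)$, and since the spectrum of an element of a Banach algebra is closed, $\overline{h(\mathbb{C}_+)}\subseteq\sigma(\mathscr{M}(A^2_{(m)}),h)$.

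For the equality when $m\le 1$, let $\lambda\notin\overline{h(\mathbb{C}_+)}$ and set $\delta:=\operatorname{dist}\bigl(\lambda,\overline{h(\mathbb{C}_+)}\bigr)>0$, so that $|h-\lambda|\ge\delta$ on $\mathbb{C}_+$ and $g:=(h-\lambda)^{-1}$ is analytic on $\mathbb{C}_+$ with $\|g\|_{H^\infty(\mathbb{C}_+)}\le 1/\delta$. For $m=0$ this already shows $g\in H^\infty(\mathbb{C}_+)=\mathscr{M}(A^2_{(0)})$. For $m=1$ we differentiate: $g'=-h'(h-\lambda)^{-2}$, so $|g'|^2\le\delta^{-4}|h'|^2$; since $h\in\mathscr{M}(A^2_{(1)})$, the characterisation of multipliers proved above (for $m=1$, \eqref{eq:quasicarleson} says exactly that $|h'|^2\,d\nu_1$ is a Carleson measure for $A^2_{(1)}$) shows $|h'|^2\,d\nu_1$ is Carleson for $A^2_{(1)}$, hence so is the smaller measure $|g'|^2\,d\nu_1$; combined with $g\in H^\infty(\mathbb{C}_+)$, the sufficiency direction of that characterisation gives $g\in\mathscr{M}(A^2_{(1)})$. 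In both cases $\lambda\notin\sigma(\mathscr{M}(A^2_{(m)}),h)$, so $\sigma(\mathscr{M}(A^2_{(m)}),h)=\overline{h(\mathbb{C}_+)}$.

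For Part~2, suppose towards a contradiction that $F\in A^2_{(m)}$ with $F^{-1}\in A^2_{(m)}$; if $F$ had a zero in $\mathbb{C}_+$ then $F^{-1}$ would not be analytic there, so we may assume $F$ is zero-free. By the reproducing property and the Cauchy--Schwarz inequality, $|F(z)|\le\|F\|_{A^2_{(m)}}\bigl\|k_z^{A^2_{(m)}}\bigr\|_{A^2_{(m)}}$ and likewise for $F^{-1}$, so for every $z\in\mathbb{C}_+$ we get $1=|F(z)\,F^{-1}(z)|\le\|F\|_{A^2_{(m)}}\bigl\|F^{-1}\bigr\|_{A^2_{(m)}}\int_0^\infty e^{-2t\re z}/w_{(m)}(t)\,dt$. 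Fixing one $z_0\in\mathbb{C}_+$, the integrand is, for $\re z\ge\re z_0$, dominated by the integrable function $t\mapsto e^{-2t\re z_0}/w_{(m)}(t)$ and tends to $0$ pointwise as $\re z\to\infty$, so by dominated convergence the integral tends to $0$. Letting $\re z\to\infty$ gives $1\le 0$, a contradiction, and therefore $F^{-1}\notin A^2_{(m)}$.

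The only step requiring genuine care is the $m=1$ case of the equality in Part~1: one applies the necessity half of the multiplier characterisation to $h$ and its sufficiency half to $(h-\lambda)^{-1}$, using that the latter is automatically bounded on $\mathbb{C}_+$ (by $1/\delta$) once $\lambda$ lies outside $\overline{h(\mathbb{C}_+)}$. Everything else is a direct combination of the reproducing-kernel pointwise estimate with dominated convergence; note that the argument for Part~2 in fact shows $\inf_{z\in\mathbb{C}_+}|F(z)|=0$ for every $F\in A^2_{(m)}$.
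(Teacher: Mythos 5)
Your proof is correct. For Part 1 your route is essentially the paper's: the inclusion follows because an inverse of $h-\lambda$ in $\mathscr{M}(A^2_{(m)})$ would have to be the pointwise inverse (the paper phrases this instead as $\overline{h(\mathbb{C}_+)}=\sigma(H^\infty(\mathbb{C}_+),h)$ together with $\mathscr{M}(A^2_{(m)})\subseteq H^\infty(\mathbb{C}_+)$, but the content is the same), and the equality for $m=1$ uses exactly the paper's device of feeding $(h-\lambda)^{-1}$ back into the characterisation \eqref{eq:quasicarleson}; your explicit bound $|g'|^2\le\delta^{-4}|h'|^2$ is a tidier way of organising the paper's computation for $(F/h)'$. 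Part 2 is where you genuinely diverge: the paper argues that $|F|<\varepsilon$ on a region of infinite measure, so that $|1/F|>1/\varepsilon$ there and the $A^2_{\nu_0}$-component of the norm of $F^{-1}$ diverges; you instead use the decay $\bigl\|k_z^{A^2_{(m)}}\bigr\|^2=\int_0^\infty e^{-2t\re z}/w_{(m)}(t)\,dt\to 0$ as $\re z\to\infty$ (dominated convergence) to force $F(z)F^{-1}(z)\to 0$, contradicting $FF^{-1}\equiv 1$. Your version is self-contained and quantitative --- it needs only the kernel formula \eqref{eq:kernel}, which the paper itself exploits in the same way in the proof of Theorem \ref{thm:bana} --- whereas the paper's version leaves the construction of the infinite-measure region implicit (and contains the slip $|1/F|>\varepsilon$ where $|1/F|>1/\varepsilon$ is meant). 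Both arguments yield the same by-product, namely that every $F\in A^2_{(m)}$ satisfies $\inf_{z\in\mathbb{C}_+}|F(z)|=0$.
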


\begin{proof}
Let $h \in \mathscr{M}(A^2_{(m)})$. We have that $(h-\lambda)^{-1} \in H^\infty(\mathbb{C}_+)$, for some $\lambda \in \mathbb{C}$, if and only if $\inf_{z \in \mathbb{C}_+} \left|h(z)-\lambda \right| > 0$, and consequently $\sigma(H^\infty(\mathbb{C}_+),h) = \overline{h(\mathbb{C}_+)}$. If $\lambda \in \sigma(H^\infty(\mathbb{C}_+),h)$, then $(h-\lambda)^{-1} \notin H^\infty(\mathbb{C}_+) \supseteq \mathscr{M}(A^2_{(m)})$, so clearly $\overline{h(\mathbb{C}_+)} = \sigma(H^\infty(\mathbb{C}_+),h) \subseteq \sigma(\mathscr{M}(A^2_{(m)}), h)$. For the reverse inclusion recall that $\mathscr{M}(A^2_{(0)}) = H^\infty(\mathbb{C}_+)$, and also observe that if $h^{-1} \in H^\infty(\mathbb{C}_+)$, then
\begin{align*}
	\int_{\mathbb{C}_+} \left|\left(\frac{F}{h}\right)'\right|^2 \, d\nu_1 &\lessapprox \left( \left\|\frac{1}{h}\right\|^2_{H^\infty(\mathbb{C}_+)} \int_{\mathbb{C}+}\left|h'F\right|^2 \, d\nu_1 + \left\|h\right\|^2_{H^\infty(\mathbb{C}_+)}\right) \int_{\mathbb{C}+}\left|F'\right|^2 \, d\nu_1  \\
	&\stackrel{\eqref{eq:quasicarleson}}{\lessapprox} \left(\left\|\frac{1}{h}\right\|^2_{H^\infty(\mathbb{C}_+)} + \left\|h\right\|^2_{H^\infty(\mathbb{C}_+)}\right)\left\|F\right\|^2_{A^2_{(1)}} < \infty.
\end{align*}
That is $h^{-1} \in \mathscr{M}(A^2_{(1)})$.
For the second part, note that for every $F \in A^2_{(m)}$ and each $\varepsilon > 0$ there exists a domain $\Omega_\varepsilon \subseteq \mathbb{C}_+$ with infinite measure, such that $\left|F(z)\right| < \varepsilon$, for all $z \in \Omega_\varepsilon$, but then $\left|1/F(z)\right| > \varepsilon$ on the same region, so it cannot be in $A^2_{(m)}$.
\end{proof}

For the remaining part of this paper we shall assume that $A^2_{(m)}$ is a Banach algebra.
\begin{remark}
If $A^2_{(m)}$ is an algebra, then it must, by definition, be an ideal in $\mathscr{M}(A^2_{(m)})$. A natural question to ask here is: whether it could be a maximal ideal. If that was the case, then it would be the kernel of an algebra homomorphism, so $\codim_{\mathscr{M}(A^2_{(m)})} A^2_{(m)} = 1$, and since $A^2_{(m)} \subset A^2_{(m)} + \mathbb{C} \subseteq \mathscr{M}(A^2_{(m)})$, we must have $\mathscr{M}(A^2_{(m)}) = A^2_{(m)} + \mathbb{C}$. Conversely $A^2_{(m)} + \mathbb{C}$ is the canonical unitisation of $A^2_{(m)}$, so if $\mathscr{M}(A^2_{(m)}) = A^2_{(m)} + \mathbb{C}$, then $A^2_{(m)}$ must be a maximal ideal there. In the simplest case, when $m=1$ and $\nu_0=\nu_1$ we have that
\begin{displaymath}
	\left\{g \in H^\infty(\mathbb{C}_+) \; : \; g' \in H^\infty(\mathbb{C}_+) \right\} \subseteq \mathscr{M}(A^2_{(1)}),
\end{displaymath}
so $A^2_{(1)}$ cannot be a maximal ideal (take for example $e^{-z} \notin A^2_{(m)} + \mathbb{C}$, which is obviously a multiplier). It still remains unclear whether there exists a sequence of measures $(\nu_n)_{n=0}^m$ such that $A^2_{(m)}$ would be a maximal ideal in the space of its multipliers.
\end{remark}

\begin{theorem}
Suppose that for each $a > 0$ there exists $K>0$ such that $w_{(m)}(t) \leq Ke^{at}$, for all $t >0$. Let $\pi : \mathfrak{M}(A^2_{(m)}) \longrightarrow \overline{\mathbb{D}}$ (the closed unit disk of the complex plane) be given by
\begin{displaymath}
	\pi(\varphi) = \varphi\left(\frac{1-z}{1+z}\right) \; \; \; \; \; (\varphi \in \mathfrak{M}(\mathscr{M}(A^2_{(m)})))
\end{displaymath}
(that is, $\pi$ is the Gel'fand transform of the function $(1-z)/(1+z)$. Then
\begin{enumerate}
	\item $\pi$ is surjective.
	\item If $m=1$ or $(\tilde{\nu}_n)_{n=0}^m$ satisfy \eqref{eq:measure}, then  $\pi$ is injective over the open unit disk $\mathbb{D}$ and $(\pi|^{\mathbb{D}})^{-1}$ (that is, the inverse of the restriction of $\pi$ to $\mathbb{D}$ in its image) maps $\mathbb{D}$ homeomorphically onto an open subset $\Delta \subset \mathfrak{M}(\mathscr{M}(A^2_{(m)}))$.
\end{enumerate}
\end{theorem}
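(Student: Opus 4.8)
The plan is to treat this as a half-plane analogue of the classical fibration of the maximal ideal space of $H^\infty$ over the disk (via the Cayley transform), adapting the argument of Hoffman to our Banach-algebra-that-is-also-a-Hilbert-space setting. Throughout write $\varphi_{(z)} := (1-z)/(1+z)$ for the conformal map of $\mathbb{C}_+$ onto $\mathbb{D}$; note $\varphi_{(z)}$ is a bounded analytic function on $\mathbb{C}_+$ with $\|\varphi_{(z)}\|_{H^\infty} \le 1$, and since $A^2_{(m)}$ is a Banach algebra it is in particular a unital subalgebra of $H^\infty(\mathbb{C}_+)$, so $\varphi_{(z)}$ does not by itself lie in $A^2_{(m)}+\mathbb{C}$; rather, $\pi$ is defined on $\mathfrak{M}(\mathscr{M}(A^2_{(m)}))$, where constants live and $\varphi_{(z)}$ is certainly a multiplier (this is where the growth hypothesis $w_{(m)}(t)\le Ke^{at}$ will be used — it guarantees $e^{-az}F \in A^2_{(m)}$ for $F\in A^2_{(m)}$, hence enough bounded analytic functions, including Möbius factors, are multipliers). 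Because $\pi$ is a component of the Gel'fand transform, it is automatically continuous from the weak-$*$ (Gel'fand) topology on $\mathfrak{M}(\mathscr{M}(A^2_{(m)}))$ to $\overline{\mathbb{D}}$.

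For surjectivity (part 1): first observe that for each $w \in \mathbb{D}$ the point evaluation at $z_w := (1-w)/(1+w) \in \mathbb{C}_+$ is a nonzero multiplicative functional on $\mathscr{M}(A^2_{(m)})$ (it is multiplicative since multiplication is pointwise, and bounded because $\|M_h\|\ge \|h\|_{H^\infty}\ge |h(z_w)|$ by Lemma 2), so it defines $\varphi_w \in \mathfrak{M}(\mathscr{M}(A^2_{(m)}))$ with $\pi(\varphi_w)=\varphi_{(z_w)}=w$. This already gives $\mathbb{D}\subseteq \pi(\mathfrak{M})$; since $\mathfrak{M}$ is compact and $\pi$ continuous, $\pi(\mathfrak{M})$ is a compact, hence closed, subset of $\overline{\mathbb{D}}$ containing $\mathbb{D}$, so it equals $\overline{\mathbb{D}}$.

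For part 2, the injectivity of $\pi$ over $\mathbb{D}$: I would show that if $\varphi \in \mathfrak{M}(\mathscr{M}(A^2_{(m)}))$ satisfies $\pi(\varphi)=w\in\mathbb{D}$, then $\varphi$ must be evaluation at $z_w$. The key is that $w\in\mathbb{D}$ forces $|\varphi(\varphi_{(z)})|<1$, i.e. the Möbius factor $\varphi_{(z)}-w$ (equivalently a Blaschke-type factor $b_{z_w}$ vanishing at $z_w$) is inverted: more precisely, I want to produce, for any $h\in\mathscr{M}(A^2_{(m)})$, the factorisation $h - h(z_w)\cdot 1 = b_{z_w}\cdot g$ with $g$ again a multiplier, so that $\varphi(h)=\varphi(h(z_w)) = h(z_w)$. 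Division by the inner factor $b_{z_w}$ preserves $H^\infty$, but I must check it preserves $\mathscr{M}(A^2_{(m)})$; here is exactly where the hypothesis "$m=1$, or the $\tilde\nu_n$ satisfy \eqref{eq:measure}" enters — under \eqref{eq:measure} Theorem 4 gives control of $\Alg_m$ and, combined with the multiplier characterisation Theorem 3 (the quasi-Carleson condition \eqref{eq:quasicarleson}), one gets that $(h-h(z_w))/b_{z_w}$ still satisfies the requisite Carleson/quasi-Carleson bounds because differentiating the quotient reintroduces only lower-order derivatives of $h$ already controlled, the boundedness of $b_{z_w}$ and of $1/b_{z_w}$ away from $z_w$ handling the rest. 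This identifies the fibre $\pi^{-1}(w)$ with the single point $\varphi_w$, giving injectivity. Finally, $\pi|_{\pi^{-1}(\mathbb{D})}$ is then a continuous bijection; that $\pi^{-1}(\mathbb{D})$ is open in $\mathfrak{M}$ follows since $\{|\varphi(\varphi_{(z)})|<1\}$ is open, and the inverse map $w\mapsto \varphi_w$ is continuous because $w\mapsto \varphi_w(h)=h(z_w)$ is continuous for each fixed multiplier $h$ (analyticity of $h$ on $\mathbb{C}_+$), so we have a homeomorphism of $\mathbb{D}$ onto the open set $\Delta:=\pi^{-1}(\mathbb{D})\subset \mathfrak{M}(\mathscr{M}(A^2_{(m)}))$.

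The main obstacle I anticipate is the division step: showing that dividing a multiplier by the elementary Blaschke factor $b_{z_w}$ (or by $\varphi_{(z)}-w$) keeps it in $\mathscr{M}(A^2_{(m)})$ rather than merely in $H^\infty(\mathbb{C}_+)$. For $m\le 1$ this is essentially the computation displayed in the previous theorem's proof (inverting $h$ when $h^{-1}\in H^\infty$), but for general $m$ it requires iterating the quasi-Carleson estimate \eqref{eq:quasicarleson} through all derivatives up to order $m$, and it is precisely to make those cross-terms between $F^{(n-k)}$ and the derivatives of the quotient summable that the monotonicity hypothesis \eqref{eq:measure} on the measures $\tilde\nu_n$ is imposed. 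Everything else — continuity and compactness bookkeeping, the surjectivity argument, openness of $\Delta$ — is soft and follows the $H^\infty$ template verbatim.
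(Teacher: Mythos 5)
Your overall architecture (evaluation functionals give $\mathbb{D}\subseteq\pi(\mathfrak{M})$, compactness gives surjectivity, a Blaschke-factor division gives injectivity, weak topologies give the homeomorphism) is the same Hoffman-style template the paper follows, and the soft parts are fine. But the decisive technical step is exactly the one you flag as "the main obstacle" and then do not carry out, and the route you sketch for it is not the right one. You propose to factor $h-h(z_w)=b_{z_w}\,g$ with $g$ \emph{again a multiplier}, and to verify this by "iterating the quasi-Carleson estimate \eqref{eq:quasicarleson} through all derivatives". That condition is an embedding condition quantified over all $F\in A^2_{(m)}$, not a pointwise bound on derivatives of $h$, so "differentiating the quotient reintroduces only lower-order derivatives of $h$ already controlled" does not constitute a proof that the quotient satisfies \eqref{eq:quasicarleson}; as written this step is a genuine gap. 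The paper never divides a multiplier at all: it divides an element $F\in A^2_{(m)}$ vanishing at $\kappa=(1-\lambda)/(1+\lambda)$, writing $F=\frac{z-\kappa}{z+\overline{\kappa}}G$ with $G\in H^\infty$, and proves $G\in A^2_{(m)}$ by direct integral estimates (splitting $\mathbb{C}_+$ into a small closed ball around $\kappa$ and its complement, where $\frac{z+\overline{\kappa}}{z-\kappa}$ is bounded); it is precisely in absorbing the cross terms $\int|F^{(n-k)}(\cdot)^{(k)}|^2\,d\nu_n$ into $\|F^{(n-k)}\|^2_{A^2_{\nu_{n-k}}}$ that \eqref{eq:measure} is used (and why $m=1$ needs no hypothesis). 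A general multiplier $h$ vanishing at $\kappa$ is then handled by the reduction $h\mapsto h/(1+z)\in A^2_{(m)}$, which gives $\varphi(h)\varphi\bigl(\frac{1}{1+z}\bigr)=0$ with $\varphi\bigl(\frac{1}{1+z}\bigr)=\frac{1+\lambda}{2}\neq 0$. If you insist on your formulation, note that $(h-h(z_w))F\in A^2_{(m)}$ vanishes at $z_w$ for every $F\in A^2_{(m)}$, so your multiplier-division claim actually reduces to the same $A^2_{(m)}$-division lemma; either way that lemma must be proved, and it is missing from your proposal.

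Two smaller points. First, your stated use of the growth hypothesis is off: $w_{(m)}(t)\le Ke^{at}$ does not give $e^{-\alpha z}F\in A^2_{(m)}$ for $F\in A^2_{(m)}$ (that would need $w_{(m)}(s+\alpha)\lesssim w_{(m)}(s)$); what it gives is $e^{-\alpha t}\in L^2_{w_{(m)}}(0,\infty)$, hence $\frac{1}{z+\alpha}\in A^2_{(m)}$ and so $\frac{1-z}{1+z}=\frac{2}{1+z}-1\in A^2_{(m)}+\mathbb{C}\subseteq\mathscr{M}(A^2_{(m)})$, which is also what makes the functions $\frac{1}{1+z}$ and $\frac{(1+z)(1+\kappa)}{2(z+\overline{\kappa})}$ available in the injectivity argument. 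Second, to know $\pi$ lands in $\overline{\mathbb{D}}$ you need $r\bigl(\frac{1-z}{1+z}\bigr)\le 1$ \emph{in the multiplier algebra}, not just $\bigl\|\frac{1-z}{1+z}\bigr\|_{H^\infty}\le 1$; the paper establishes this by showing $\sigma\bigl(\mathscr{M}(A^2_{(m)}),\frac{1-z}{1+z}\bigr)=\sigma\bigl(H^\infty(\mathbb{C}_+),\frac{1-z}{1+z}\bigr)$, exhibiting $\bigl(\frac{1-z}{1+z}-\lambda\bigr)^{-1}$ explicitly as a multiplier when $|\lambda|>1$. This is harmless for surjectivity alone but should not be passed over in silence.
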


\begin{proof}
First, note that, for all $\alpha \in \mathbb{C}_+$, $\frac{1}{z+\alpha}$ is in $A^2_{(m)}$, since
\begin{displaymath}
\int_0^\infty \left|e^{-\alpha t}\right|^2 w_{(m)}(t) \, dt \lessapprox \int_0^\infty e^{-t\re(\alpha)} \, dt = \frac{1}{\re(\alpha)}
\end{displaymath}
so $e^{-\alpha t} \in L^2_{w_{(m)}}(0, \infty)$, and hence
\begin{displaymath}
	\frac{1-z}{1+z} = \frac{2}{1+z} - 1 \in A^2_{(m)} + \mathbb{C} \subseteq \mathscr{M}(A^2_{(m)}).
\end{displaymath}
We know that $\sigma\left(\mathscr{M}(A^2_{(m)}), \frac{1-z}{1+z}\right) \supseteq \sigma\left(H^\infty(\mathbb{C}_+), \frac{1-z}{1+z}\right)$. And if $\left(\frac{1-z}{1+z} - \lambda\right)^{-1} \in H^\infty(\mathbb{C}+)$, for some $\lambda \in \mathbb{C}$, then
\begin{displaymath}
\left(\frac{1-z}{1+z} - \lambda\right)^{-1} = \frac{1+z}{1-\lambda-z(1+\lambda)} = \frac{1}{1+\lambda} \left[-\frac{2}{1+\lambda}\underbrace{\left(z-\frac{1-\lambda}{1+\lambda}\right)^{-1}}_{\in A^2_{(m)}} - 1 \right]
\end{displaymath}
is a multiplier on $A^2_{(m)}$. So we actually have 
\begin{displaymath}
\sigma\left(\mathscr{M}(A^2_{(m)}), \frac{1-z}{1+z}\right) = \sigma\left(H^\infty(\mathbb{C}_+), \frac{1-z}{1+z}\right)
\end{displaymath}
and hence
\begin{displaymath}
\left|\pi(\varphi)\right| \leq \sup_{\varphi \in \mathfrak{M}(\mathscr{M}(A^2_{(m)}))} \left|\varphi\left(\frac{1-z}{1+z}\right)\right| = r\left(\frac{1-z}{1+z}\right)=1.
\end{displaymath}
Since the evaluation homomorphisms are in $\mathfrak{M}(\mathscr{M}(A^2_{(m)}))$, every point of the open unit disk is in the image of $\pi$. Also, $\mathscr{M}(A^2_{(m)})$ is unital, and hence compact, so its image under $\pi$ must also be compact and thus $\pi$ is surjective.
For the second part, let $\left|\lambda\right| <1$ and suppose that $\pi(\varphi) = \lambda$. Then for any $F \in A^2_{(m)}$ vanishing at $\kappa = \frac{1-\lambda}{1+\lambda} \in \mathbb{C}_+$, we have $F= \frac{z-\kappa}{z+\overline{\kappa}}G$, with $G \in H^\infty(\mathbb{C}_+)$ (see \cite{mashreghi2009}, p. 293). Let $\overline{B_r(\kappa)}$ be the closed ball, centred at $\kappa$, with radius $r>0$. Choose $r$ small enough to get $\overline{B_r(\kappa)} \subset \mathbb{C}_+$, then
\begin{displaymath}
	\int_{\mathbb{C}_+} \left|G\right|^2 \, d\nu_0 = \int_{\overline{B_r(\kappa)}} \left|G\right|^2 \, d\nu_0  + \int_{\mathbb{C}_+ \setminus \overline{B_r(\kappa)}} \left|\frac{z+\overline{\kappa}}{z-\kappa}F\right|^2 \, d\nu_0.
\end{displaymath}
The first integral is finite, since $G$ is bounded and $\overline{B_r(\kappa)}$ is compact. The second one is also finite, since $\frac{z+\overline{\kappa}}{z-\kappa}$ is bounded on $\mathbb{C}_+ \setminus \overline{B_r(\kappa)}$. Let
\begin{displaymath}
	c := \sup_{z \in \mathbb{C}_+ \setminus \overline{B_r(\kappa)}} \left|\frac{z+\overline{\kappa}}{z-\kappa}\right|.
\end{displaymath}
Then we have
\begin{align*}
\int_{\mathbb{C}_+} \left|G'\right|^2 \, d\nu_1 &= \int_{\overline{B_r(\kappa)}} \left|G'\right|^2 \, d\nu_1 + \int_{\mathbb{C}_+ \setminus \overline{B_r(\kappa)}} \left|F'\frac{z+\overline{\kappa}}{z-\kappa} - F\frac{2\re{\kappa}}{(z-\kappa)^2}\right|^2 \, d\nu_1 \\
&\leq \int_{\overline{B_r(\kappa)}} \left|G'\right|^2 \, d\nu_1+ c^2 \left\|F'\right\|^2_{A^2_{\nu_1}} \\
&\; \; \; \; \; +4(c\re(\kappa))^2 \left\|F\right\|^2_{H^\infty(\mathbb{C}_+)} \int_{\mathbb{C}_+} \left|(z+\overline{\kappa})^{-2}\right| \, d\nu_1
\end{align*}
which is also finite, since $\left|G'\right|^2$ is continuous, $\overline{B_r(\kappa)}$ is compact and $(z+\overline{\kappa})^{-1} \in A^2_{(m)}$ implies $(z+\overline{\kappa})^{-2} \in A^2_{\nu_1}$. If $n > 1$, then
\begin{align*}
\int_{\mathbb{C}_+} \left|G^{(n)}\right|^2 \, d\nu_n &= \int_{\overline{B_r(\kappa)}} \left|G^{(n)}\right|^2 \, d\nu_n \\
&\; \; \; \; \; + \int_{\mathbb{C}_+ \setminus \overline{B_r(\kappa)}} \left|\sum_{k=0}^n \binom{n}{k} F^{(n-k)} \left(\frac{z+\overline{\kappa}}{z-\kappa}\right)^{(k)} \right|^2 \, d\nu_n \\
&\lessapprox \int_{\overline{B_r(\kappa)}} \left|G^{(n)}\right|^2 \, d\nu_n + \sum_{k=1}^{n-1} \left\|F^{(n-k)}\right\|^2_{A^2_{\nu_{n-k}}} \\
&\; \; \; \; \; + \left\|F\right\|^2_{H^\infty(\mathbb{C}_+)} \left\|(z+\overline{\kappa})^{-n}\right\|^2_{A^2_{\nu_n}} < \infty.
\end{align*}
Therefore $G \in A^2_{(m)}$. Let
\begin{displaymath}
	H := \underbrace{-\frac{(1+z)(1+\kappa)}{2(z+\overline{\kappa})}}_{\in \mathscr{M}(A^2_{(m)})} G \in A^2_{(m)}.
\end{displaymath}
Then
\begin{displaymath}
	\varphi(F) = \varphi\left(\frac{1-z}{1+z}-\lambda \right)\varphi(H) = 0.
\end{displaymath}
For any $h \in \mathscr{M}(A^2_{(m)})$, which vanishes at $\kappa$ we then have
\begin{displaymath}
0 = \varphi\left(\frac{h}{z+1}\right) = \varphi(h) \varphi\left(\frac{1}{1+z}\right) = \frac{\varphi(h)}{2} \varphi\left(\frac{1-z}{1+z} + 1 \right) = \varphi(h) \frac{\lambda+1}{2},
\end{displaymath}
so $\varphi$ must in fact be the evaluation homomorphism, proving injectivity. For the remaining part, let $\Delta := (\pi|^{\mathbb{D}})^{-1}(\mathbb{D})$. Then $\pi$ maps $\Delta$ homeomorphically onto $\mathbb{D}$, since the topology of $\Delta$ is the weak topology defined by Gel'fand transforms of functions from $\mathscr{M}(A^2_{(m)})$, and the topology of $\mathbb{D}$ is the weak topology defined by bounded functions in $\mathscr{M}(A^2_{(m)})$.
\end{proof}

The above theorem shows the existence of the analytic disk inside the character space $\mathfrak{M}(\mathscr{M}(A^2_{(m)}))$, and therefore it would be a natural question to ask whether this disk is dense therein, that is to see if the Corona Theorem could hold in this setting. The answer to this question is of course well-known and affirmative for $\mathscr{M}(H^2)=H^\infty$ and $\mathscr{M}(\mathcal{D})$, and thus one could make a similar conjecture about the multiplier space of $A^2_{(m)}$, but the techniques used to prove it for these two previous space are insufficient here, and it is not clear how their shortcomings could potentially be bypassed. Therefore, for now, it must remain an open question.

\textbf{Acknowledgment} 	The author of this article would like to thank the UK Engineering and Physical Research Council (EPSRC) and the School of Mathematics at the University of Leeds for their financial support. He is extremely grateful to Professor Jonathan R. Partington for all the help, guidance, and his useful comments.

\end{document}